\theoremstyle{plain}   
\newtheorem{theorem}{Theorem}[section]   
\newtheorem{corollary}[theorem]{Corollary}     
\newtheorem{lemma}[theorem]{Lemma}         
\newtheorem{proposition}[theorem]{Proposition}  
\newtheorem{question}[theorem]{Question}
\theoremstyle{definition}
\newtheorem{definition}[theorem]{Definition}  
\newtheorem{remark}[theorem]{Remark}
\newtheorem{example}[theorem]{Example}
\newtheorem*{proofof}{Proof of Theorem \ref{Q}}
\numberwithin{equation}{section}
\newcommand{\ep}{\varepsilon}
\newcommand{\R}{{\mathbb R}}
\newcommand{\N}{{\mathbb N}}
\newcommand{\vf}{\varphi}
\newcommand{\diam}{\operatorname{diam}}
\newcommand{\M}{{\cal M}}
\newcommand{\cal}{\mathcal}
\newcommand{\rec}{{\rm rec}\,}
\newcommand{\spa}{\operatorname{span}}
\newcommand{\loc}{{\rm loc}\,}
\begin{document}

\title{Functions on a convex set which are both $\omega$-semiconvex and $\omega$-semiconcave}

\author{V\' aclav Kry\v stof}
\author{Lud\v ek Zaj\'\i\v cek}

\address{Charles University, Faculty of Mathematics and Physics, Sokolovsk\'a 83, 186 75 Praha 8, Czech Republic}

\email{krystof@karlin.mff.cuni.cz}
\email{zajicek@karlin.mff.cuni.cz}

\subjclass{Primary: 26B25; Secondary: 26B35}

\keywords{$\omega$-semiconvex functions, $\omega$-semiconcave functions, $C^{1,\omega}$-smooth functions, smoothness on all lines, converse Taylor theorem, strongly $\alpha(\cdot)$-paraconvex functions} 

\thanks{The research was supported by GA\v CR~18-11058S}

\begin{abstract}
Let $G \subset \R^n$ be an open convex set which is either bounded or contains a translation of
 a convex cone with nonempty interior. It is known that then, for every modulus $\omega$,
 every function on $G$ which is both semiconvex and semiconcave with modulus $\omega$ is (globally)
 $C^{1,\omega}$-smooth. We show that this result is optimal in the sense that the assumption on
 $G$ cannot be relaxed. We also present direct 
short proofs of the above  mentioned result and of some its quantitative versions.
 Our results have immediate consequences concerning (i) a first-order quantitative
 converse Taylor theorem  and  (ii) the problem whether 
$f\in C^{1,\omega}(G)$  whenever  $f$ is continuous and smooth in a corresponding sense on all lines.
 We hope that these consequences are of an independent interest.
\end{abstract}

\maketitle

\section{Introduction}

First we recall  a classical reasult on smoothness of functions which are both semiconvex and semiconcave (i.e.,  semiconvex and semiconcave with linear modulus). 
\begin{definition}\label{klsem}
Let $X$  be a Hilbert space, $G \subset X$ an open convex set and $f$ a function on $G$. We say that
 $f$ is semiconvex, if there exists $C\geq 0$ and a continuous convex function $g$ on $G$
such that
$$  f(x)= g(x) - \frac{C}{2} \cdot \|x\|^2,\ \ \ x \in G.$$
Then we say that $f$ is semiconvex with constant $C$.
We say that $f$ is semiconcave on $G$ (with constant $C$) if $-f$ is semiconvex on $G$  
 (with constant $C$).
\end{definition}
 Let $X$, $G$ and $f$ be as in Definition \ref{klsem}. Then it
 is very easy to show (see Remark \ref{lehimpl}) that  
\begin{multline}\label{kls}
f' \ \text{exists and is Lipschitz with constant}\ C	\\
\Longrightarrow\ \ \ 
f\ \text{ is both semiconvex and semiconcave with   constant}\ C.
	\end{multline}  
	  Rather suprisingly, also the converse of \eqref{kls} holds:
	\begin{multline}\label{kl1}
	f\ \text{ is both semiconvex and semiconcave with  constant}\ C  \\
\Longrightarrow\ \ \ 	f' \ \text{exists and is Lipschitz with  constant}\ C.
	\end{multline}
	Some bibliographic notes concerning this well-known interesting
 nontrivial  result are contained
 in Section \ref{lin}, where also its  short elementary 	proof  is presented.

	Consequently, we have that
	\begin{multline}\label{kl1e}
	f\ \text{ is both semiconvex and semiconcave with  constant}\ C  \\
\Longleftrightarrow\ \ \ 	f' \ \text{exists and is Lipschitz with  constant}\ C.
	\end{multline}
	In particular,
	\begin{equation}\label{kl2}
f\  \text{is both semiconvex and semiconcave} \ \Longleftrightarrow\ 
 f \in C^{1,1} (G).
\end{equation}

In the present article
 we study analogues of  \eqref{kl1}  and  \eqref{kl2} for functions
 which are both $\omega$-semiconvex and $\omega$-semiconcave. We use the following
 terminology.

\begin{definition}
We denote by $ \M $ the set of all $ \omega:[0,\infty)\to [0,\infty) $ which are non-decreasing and satisfy $ \lim_{t\to 0+}\omega(t)=0 $.
\end{definition}

\begin{definition}\label{omse} 
Let $ X $ be a normed linear space, $ G\subset X $ an open convex set and $ \omega\in\M $.
 We say that a continuous $ f:G\to\R $ is semiconvex with modulus $ \omega $ (or $\omega$-semiconvex for short) if
$$
f(\lambda x+(1-\lambda)y)\leq \lambda f(x)+(1-\lambda)f(y) + \lambda(1-\lambda)\Vert x-y \Vert \omega(\Vert x-y \Vert)
$$
for every $ x,y\in G $ and $ \lambda\in [0,1] $. We say that $ f:G\to\R $ is $\omega$-semiconcave  if $ -f $ is $\omega$-semiconvex.
\end{definition}
\begin{remark}\label{H}
If $X$ is a Hilbert space, it is easy to show (cf. \cite[p. 30]{CaSi})
 that $f$ is semiconvex on $G\subset X$ with constant $C\geq 0$ if and only if $f$ is 
 $\omega$-semiconvex with modulus $\omega(t)=\frac{C}{2}t,\ t\geq 0$.
\end{remark}
\begin{remark}\label{para}
The notion of $\omega$-semiconvex functions is very close to 
Rolewicz's notion of {\it strongly $\alpha(\cdot)$-paraconvex functions} (which has
 in \cite{Rol3} slightly different sense than in \cite{Rol2}). Namely 
(see e.g. \cite[Remark 2.11]{DZ1})
  if $\omega \in \M$ and
 $\alpha(t)= t\omega(t),\ t\geq 0$, then (by the definition from \cite{Rol3}),
$$\text{$f$ is $\omega$-semiconvex \ $\Longrightarrow$\ $f$ is  
 strongly $\alpha(\cdot)$-paraconvex\ $\Longrightarrow$\ 
 $f$ is $(2\omega)$-semiconvex.}$$
Thus most results conserning $\omega$-semiconvex functions have obvious
 consequences for  strongly $\alpha(\cdot)$-paraconvex functions (and vice versa).
 We will not  formulate such consequences of our results.
\end{remark}   

 \begin{definition}\label{cjo}
If $ G $ is an open subset of a normed linear space and $ \omega\in\M $, then we denote by $ C^{1,\omega}_*(G) $ the set of all Fr\' echet differentiable $ f:G\rightarrow\R $ such that $ f' $ is uniformly continuous with modulus $ \omega $.
We set $ C^{1,\omega}(G):= \bigcup_{K>0} C^{1,K\omega}_*(G) $.
\end{definition}
Note that  so defined  $ C^{1,\omega}$-smoothness, which coincides with (global)
$C^{1,1}$-smoothness in the case of a linear $\omega$, is common in the literature.
 We will sometimes use also  (stronger)  $ C_*^{1,\omega}$-smoothness (which
 is somewhere, e.g. in \cite{Jo} and \cite{JZ}, called ``$C^{1,\omega}$-smoothness'').
\smallskip

Let us first mention that a (weaker) version 
  of \eqref{kls} for general moduli is also easy
 (see Remark \ref{lehimpl}). It asserts that, if $X$, $G$ and $\omega$ are as in Definition \ref{omse}, then
\begin{equation}\label{limpl}
f \in C^{1,\omega}_*(G)
\Longrightarrow\ 
\text{$f$ is both $\omega$-semiconvex and $\omega$-semiconcave on}\ G.
\end{equation}
However (if $X$, $G$ and $\omega$ are as in Definition \ref{omse})
 the implication
\begin{equation}\label{impl}
f\  \text{is both $\omega$-semiconvex and $\omega$-semiconcave on $G$} \ \Longrightarrow\ 
 f \in C^{1,\omega} (G)
\end{equation}
does not hold in general.
The validity of \eqref{impl} (or of its local versions) was studied
 e.g. in  \cite[Theorem 3.3.7]{CaSi}, \cite{JTZ} and \cite{Kr}
 (and essentially already in \cite{Rol1} for $\omega(t)= t^{\alpha}$, $0< \alpha\leq 1$).
As far as we know, the strongest and the most general  versions of
 \eqref{impl} follow  from  versions of Converse Taylor Theorem proved in  \cite{Jo} and \cite{HJ}.
 In particular (see \cite[Theorem 2.6 and Remark 2.7]{Kr}),
  \cite[Corollary 126]{HJ} easily implies the following result.

\begin{theorem}\label{HJ}\cite{Kr}
Let $X$ be a normed linear space and $G\subset X$ an open convex set. Suppose that $G$ is either bounded or has the property that there are $a\in X$, $r>0$ and $\{u_n\}_{n\in \N} \subset X$,
 $\|u_n\| = n$, such that  $B(a+u_n, nr) \subset G$ for each $n \in \N$.
Then, for each modulus $\omega \in \M$ and for each  function  $f$ on $G$,
$$ f\  \text{is both $\omega$-semiconvex and $\omega$-semiconcave on}\ G \ \Longrightarrow\ 
 f \in C^{1,\omega} (G).$$
\end{theorem}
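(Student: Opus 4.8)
The plan is to deduce Theorem \ref{HJ} from the Converse Taylor Theorem of Hájek–Johanis, namely \cite[Corollary 126]{HJ}, by showing that a function $f$ which is both $\omega$-semiconvex and $\omega$-semiconcave on $G$ admits at every point $x\in G$ a (unique) affine Taylor approximation with the correct remainder estimate governed by $\omega$. First I would use the two-sided inequality: writing the $\omega$-semiconvexity and $\omega$-semiconcavity conditions with $\lambda=\tfrac12$ and midpoint $z$ gives, for $x,y\in G$ with midpoint $z=\tfrac{x+y}{2}$,
\begin{equation*}
\left| f(z)-\frac{f(x)+f(y)}{2}\right| \le \frac14\,\|x-y\|\,\omega(\|x-y\|).
\end{equation*}
From this midpoint estimate one shows in the standard way that $f$ is locally Lipschitz, hence (being both semiconvex and semiconcave) differentiable on a dense set, and then that the one-sided directional derivatives exist everywhere; combining the convex side (where $f+$ a convex correction is convex in each direction) and the concave side forces $f$ to be Gâteaux, in fact Fréchet, differentiable at every point of $G$, with a bound of the form $|f(y)-f(x)-f'(x)(y-x)|\le \|x-y\|\,\omega(\|x-y\|)$ whenever the segment $[x,y]\subset G$. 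This is essentially the local content already contained in the cited references, so I would invoke it rather than reprove it.

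The heart of the matter is then passing from such a pointwise first-order Taylor estimate valid on segments inside $G$ to membership in the global class $C^{1,\omega}(G)$, i.e. uniform continuity of $f'$ on all of $G$ with modulus $K\omega$ for some $K$. This is exactly where the geometric hypothesis on $G$ enters and where \cite[Corollary 126]{HJ} does the work: that corollary asserts that if a function on an open convex set $G$ (bounded, or containing the prescribed nested sequence of balls $B(a+u_n,nr)$ escaping to infinity along a cone direction) has, at each point, a first-order Taylor polynomial with remainder controlled by $t\mapsto t\,\omega(t)$, then $f\in C^{1,\omega}(G)$. So the step is: verify that the hypotheses of \cite[Corollary 126]{HJ} are met — the convexity of $G$, the exact form of the ball condition, and that our remainder function $r(t)=t\,\omega(t)$ has the regularity (e.g. $r(t)/t=\omega(t)\to 0$ as $t\to 0+$, $\omega$ nondecreasing) required there — and then quote the conclusion. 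The reference trail is already indicated in the excerpt (``see \cite[Theorem 2.6 and Remark 2.7]{Kr}''), so the proof is genuinely a matter of checking that the present setting is a special case.

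The main obstacle, and the only place where real care is needed, is matching definitions across sources: the Hájek–Johanis formulation of the Converse Taylor Theorem is stated in terms of a uniform bound on the Taylor remainder over the whole domain (or over the relevant balls), not merely a pointwise bound, and the constant $K$ in $C^{1,\omega}(G)=\bigcup_{K>0}C^{1,K\omega}_*(G)$ is produced by the theorem rather than being equal to $1$. One must therefore check that the segment-wise estimate $|f(y)-f(x)-f'(x)(y-x)|\le\|x-y\|\,\omega(\|x-y\|)$ derived from $\omega$-semiconvexity/semiconcavity is of exactly the type the cited corollary consumes, and that the boundedness-or-cone geometry of $G$ guarantees every pair of points can be joined through $G$ in a way compatible with the ball condition $B(a+u_n,nr)\subset G$. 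Once this bookkeeping is done, the theorem follows immediately; I would not attempt to reprove \cite[Corollary 126]{HJ} itself, since the paper explicitly presents this result only as motivation before giving its own short direct proofs of the quantitative versions.
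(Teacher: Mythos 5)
Your proposal is essentially correct, but it takes a different route from the one the paper actually uses for this theorem. You reduce everything to the Taylor-type estimate $|f(y)-f(x)-f'(x)(y-x)|\le\|y-x\|\,\omega(\|y-x\|)$ (which is exactly the paper's Lemma \ref{apr}, i.e.\ $UT^1_\omega$-smoothness in the sense of Definition \ref{UT}) and then quote the Converse Taylor Theorem \cite[Corollary 126]{HJ} as a black box; this is precisely the derivation the paper attributes to \cite{Kr} and mentions only as background. The paper's own proof is instead direct and self-contained: after Lemma \ref{apr} it does \emph{not} invoke \cite{HJ}, but proves the quantitative Lemma \ref{le_l} (a midpoint/ball argument modelled on the linear-modulus proof of Theorem \ref{lint}), giving $\Vert f'(z)-f'(y)\Vert\le 2(1+\Vert z-y\Vert/r)\,\omega(\tfrac12 r+\tfrac12\Vert z-y\Vert)$ whenever $\overline B(y,r)\subset G$, and then handles the bounded case via the eccentricity $e_G$ and the unbounded case by intersecting $G$ with large balls $B(a,(1+r)n)$ to reduce to the bounded case. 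What the paper's route buys is elementarity and explicit constants ($6e_G$ and $12(1+1/r)$, versus $15$ and $30$ extractable from \cite{HJ}); what your route buys is brevity, at the cost of importing the rather heavy general machinery of \cite[Corollary 126]{HJ} and of carefully matching its hypotheses, as you note. One caveat on your first step: your sketch of Fréchet differentiability (local Lipschitzness, differentiability on a dense set, one-sided directional derivatives) does not straightforwardly work in a general normed linear space; the clean argument, used in Lemma \ref{apr} and in \cite{Kr}, is Rolewicz's theorem producing a linear support functional $\Phi_x$ from $\omega$-semiconvexity and $\Psi_x=-\Phi_x$ from $\omega$-semiconcavity, whose sum immediately yields the two-sided estimate. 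Since you explicitly defer this step to the cited references, this is a presentational rather than a substantive gap.
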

\begin{remark}\label{ekvom}
Using  \eqref{limpl}, we easily see that the assertion of Theorem \ref{HJ} implies the
 following analogue of \eqref{kl2}:
\begin{equation}\label{chardve}
 f\  \text{is both $(K\omega)$-semiconvex and $(K\omega)$-semiconcave}\ \text{for some}\ K>0 \ \Longleftrightarrow\ 
 f \in C^{1,\omega} (G).
\end{equation} 
For a local version of \eqref{chardve} see \cite[Theorem 6.1]{JTZ}. See also Proposition 
\ref{charfor}
 for reformulations of this characterization of $C^{1,\omega}$-smoothness on some convex sets. 
\end{remark}

Let us note that an easy compactness argument shows that an unbounded open convex $G \subset \R^n$
satisfies the assumption of Theorem \ref{HJ} if and only if it contains a translation of a convex cone with nonempty interior.

The main result of the present article is the following theorem which shows  that, for $X= \R^n$,  Theorem \ref{HJ} is optimal in the sense that the assumption on
 $G$ cannot be relaxed. It follows immediately from Proposition \ref{P} (proved in Section \ref{opt}) via Remark \ref{gimpl}.

\begin{theorem}\label{hlav}
Let $G \subset \R^n$ ($n\geq 2$) be an unbounded open convex set which does not contain a translation of a convex cone with nonempty interior. Then there exists a concave modulus $\omega \in \M$
 with $\lim_{t \to \infty} \omega(t) = \infty$ and a function $f$ which is both $\omega$-semiconvex and $\omega$-semiconcave on $G$ but $f \notin C^{1,\omega}(G)$. 
\end{theorem}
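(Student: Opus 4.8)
The plan is to reduce the problem to a one-dimensional obstruction and then ``spread'' it across an unbounded convex set that is, in a quantitative sense, too thin. First I would analyze the geometry of the set $G$. Since $G\subset\R^n$ is unbounded, open, convex, and contains no translate of a convex cone with nonempty interior, an easy compactness argument (of the type mentioned just before Theorem \ref{hlav}) shows that $G$ fails the condition in Theorem \ref{HJ}: there is no choice of $a$, $r>0$ and $\{u_n\}$ with $\|u_n\|=n$ and $B(a+u_n,nr)\subset G$. Concretely, this means that the ``inradius at distance $t$'' of $G$ grows sublinearly: if one fixes a base point and follows $G$ out to distance $t$, the largest ball contained in $G$ near that point has radius $\rho(t)$ with $\rho(t)/t\to 0$. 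Equivalently, $G$ is contained, up to a bounded set, in a region that pinches; this is the quantitative input I would extract and record as a lemma.

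Next I would build the function $f$ from a one-dimensional model. The classical example showing that \eqref{impl} fails is a function of a single variable whose derivative is a ``sawtooth'' or piecewise-linear function with slope $\pm 1$ on intervals that march off to infinity but whose oscillation does not die out at the rate of any prescribed modulus; such a function is $\omega$-semiconvex and $\omega$-semiconcave on each bounded piece with uniformly controlled modulus, yet its derivative is not uniformly continuous with any modulus comparable to $\omega$ when $\omega$ is chosen to grow. The point of the sublinear pinching of $G$ is that I can place these oscillations along the ``long direction'' of $G$ at spatial scales matched to $\rho(t)$: because the cross-sectional width shrinks, a modest oscillation amplitude suffices to violate $C^{1,\omega}$ while the semiconvexity/semiconcavity inequalities, which only see pairs of points $x,y\in G$ and involve the factor $\|x-y\|\,\omega(\|x-y\|)$, remain satisfied for a suitable concave $\omega$ with $\omega(t)\to\infty$. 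Choosing $\omega$ concave and unbounded is exactly what gives enough slack: $\omega(\|x-y\|)$ is large when $x,y$ are far apart, so the semiconvexity inequality is easy for distant points, and for nearby points the local behaviour of $f$ is genuinely $C^{1,\omega}$-like by construction. I would make this precise by writing $f(x)=\psi(\ell(x))+(\text{lower-order correction})$, where $\ell$ is a linear functional picking out the long direction and $\psi$ is the sawtooth-type primitive, then verifying the defining inequality of Definition \ref{omse} directly.

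The main obstacle, and the part requiring genuine care, is the verification that the constructed $f$ is simultaneously $\omega$-semiconvex \emph{and} $\omega$-semiconcave on \emph{all} of $G$ with a \emph{single} concave modulus $\omega$, while still failing $f\in C^{1,\omega}(G)$ — note that failing $C^{1,\omega}(G)=\bigcup_{K>0}C^{1,K\omega}_*(G)$ means ruling out \emph{every} constant $K$, so the obstruction must be genuinely asymptotic and cannot be absorbed by rescaling. This forces the two competing estimates (semiconvexity needs the oscillation of $f'$ to be dominated, locally, by $\omega$; the failure of $C^{1,\omega}$ needs the oscillation of $f'$ at large scales to beat every $K\omega$) to be reconciled, and the reconciliation is possible precisely because of the sublinear pinching: the scale on which $f'$ oscillates and the scale on which $\|x-y\|$ can be large are decoupled by the geometry of $G$. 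I would isolate this as Proposition \ref{P}, do the one-dimensional estimates first (where everything is explicit), and then lift them to $G$ using the pinching lemma and the convexity of $G$ to control the relevant chords; the passage from the one-dimensional statement to $\R^n$ via Remark \ref{gimpl}, as indicated in the text, should then be essentially formal.
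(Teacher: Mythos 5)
There is a genuine gap, and it sits at the heart of your construction. You propose to take a one-dimensional ``sawtooth'' model $\psi$ and set $f(x)=\psi(\ell(x))+(\text{lower-order correction})$, with the oscillation of $\psi'$ providing the failure of $C^{1,\omega}(G)$. This cannot work: implication \eqref{impl} is \emph{true} in dimension one (Remark \ref{specimpl}(ii)), and more generally, by Lemma \ref{pp}, if $f$ is both $\omega$-semiconvex and $\omega$-semiconcave on $G$ then its restriction to every line in $G$ is $C^{1,2\omega}_*$. Your oscillations are placed precisely along the ``long direction'', i.e.\ along a line contained in $G$, so the directional derivative there is forced to have modulus $2\omega$; a function of the form $\psi\circ\ell$ with $\psi'$ uniformly continuous with modulus $2\omega$ is automatically in $C^{1,K\omega}(G)$ for some $K$ (using concavity of $\omega$ to absorb $\|\ell\|$), so the entire counterexample would have to live in the unspecified ``lower-order correction''. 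In other words, there is no one-dimensional obstruction to transport; the obstruction is intrinsically two-dimensional. The paper's example is the product $f(x_1,x_2)=\log(x_1)\,x_2$ on a pinched region $\{x_1>1,\ |x_2|<\eta(x_1)\}$ (Lemma \ref{zaklmn}), where the failure of $C^{1,K\omega}$ for every $K$ comes from the \emph{unbounded growth} of the transverse partial derivative $\partial f/\partial x_2=\log x_1$ along a ray, beating $\omega$ because the modulus is built (Lemma \ref{E}) so that $\omega(t)/\log t\to 0$ while still $\omega(t)\to\infty$, $\omega$ concave, $\omega(t)\geq\min(1,t)$, and $\eta(t)\log t\leq Dt\,\omega(t)$. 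Note in particular that $\omega$ cannot be ``chosen to grow'' freely: it must be tailored to the width function $\eta$ of $G$, and this compatibility is exactly where the work lies.

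A second, smaller gap is the reduction from general $G\subset\R^n$ to the two-dimensional model, which you describe as ``essentially formal'' via Remark \ref{gimpl}. Remark \ref{gimpl} only converts $G\in\cal G_n(\omega)$ into the failure of \eqref{impl}; establishing $G\in\cal G_n(\omega)$ for $n\geq 3$ requires the paper's induction on dimension (Proposition \ref{P}, case b), which uses recession-cone facts (Lemmas \ref{zvrc} and \ref{rchl}, the latter resting on \cite[Theorem 9.1]{Roc}), the choice of a projection $L:\R^n\to\R^{n-1}$ with $L^{-1}(\{0\})\cap\rec(G)=\{0\}$ and $\dim(L(\rec(G)))<n-1$ (via Lemma \ref{K}), and the lifting Lemma \ref{L} (with Lemma \ref{skl}), plus a separate treatment of the case where $G$ contains a line. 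Your ``pinching'' heuristic correctly identifies the qualitative geometry, but it does not by itself produce the projection or the two-dimensional case split (strip versus pinched funnel, which in the paper use different moduli), so this part of the argument is missing rather than routine.
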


Since \cite[Corollary 126]{HJ} is a rather general result (working with classes  $C^{k,\omega}(G)$
 for each $k \in \N$), we  find useful to
  present (in Section \ref{gen})  direct
 proofs (which partly follow arguments from \cite{HJ}) of Theorem \ref{HJ} and of some its quantative versions (sharper then those  which directly follow from the proof of \cite[Corollary 126]{HJ}). Namely, we prove the following result.

\begin{theorem}\label{Q}
Let $X$ be a normed linear space and $\emptyset \neq G\subset X$ an open convex set. 
 Let $\omega \in \M$ and let $f$ be a function on $G$ which is both
 $\omega$-semiconvex and $\omega$-semiconcave. Then the following assertions hold.
\begin{enumerate}
\item[(i)]
If $G$ is bounded, then $f'$ is uniformly continuous with modulus
  $6 e_G \omega $, where  $e_G:=  \frac{\diam G}{ \sup\{r: B(a,r) \subset G\}}$. 
\item[(ii)]
If $a\in X$, $r>0$ and $\{u_n\}_{n\in \N} \subset X$ with
 $\|u_n\| = n$ are such that  $B(a+u_n, nr) \subset G$ for each $n \in \N$, then
  $f'$ is uniformly continuous with modulus  $12\, (1+1/r) \omega$.
\end{enumerate}
\end{theorem}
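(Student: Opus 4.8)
The plan is to establish Theorem \ref{Q} by a unified argument that first extracts pointwise differentiability from the two-sided $\omega$-semiconvexity condition, then upgrades this to a uniform estimate on the modulus of continuity of $f'$ using the geometry of $G$. The starting observation is purely one-dimensional: if $f$ is both $\omega$-semiconvex and $\omega$-semiconcave on a segment, then for any three collinear points the divided differences are controlled from both sides, which pins down the slope up to an error governed by $\omega$. Concretely, I would first show that for $x \in G$ the difference quotient $(f(x+h)-f(x))/\|h\|$ along a fixed direction has oscillation at most a constant multiple of $\omega(\|h\|)$ as $h$ ranges over a ball; combined with an argument that $f$ restricted to lines is differentiable (a consequence of monotonicity of one-sided slopes for semiconvex functions), this yields that $f$ is Fr\'echet differentiable at every point of $G$, with $\|f'(x) - (f(x+h)-f(x))/\ldots\|$ type bounds. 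I expect the clean way to phrase the local step is: for $x,y \in G$ with the segment $[x,y]$ together with a little ambient room inside $G$, one has $\|f'(x)-f'(y)\| \le c\,\omega(\|x-y\|)$ where $c$ depends only on how much ``room'' is available around the segment relative to $\|x-y\|$.

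From there the two parts diverge only in how the geometric constant $c$ is produced. For part (i), with $G$ bounded, pick a ball $B(a,r) \subset G$ with $r$ close to the in-radius and note $\diam G < \infty$; given $x,y \in G$, the cone spanned by the segment $[x,y]$ and the ball $B(a,r)$ lies in $G$ and provides uniform transversal room of order $r\,\|x-y\|/\diam G$ along $[x,y]$, which after plugging into the local estimate produces the factor $e_G = \diam G / r$ (the constant $6$ absorbing the numerical losses in the difference-quotient manipulations). For part (ii), the relevant point is that the nested balls $B(a+u_n, nr)$, with $\|u_n\|=n$, guarantee that $G$ contains, up to translation, a cone of aperture comparable to $r$ with vertex near $a$; so for any $x,y \in G$ one can travel out along such a cone and get transversal room proportional to $r$ times the relevant distances, giving a constant of the form $12(1+1/r)$ — the $1/r$ coming from the worst case where $\|x-y\|$ is large compared with $r$ and one must exploit the cone's unboundedness rather than a single large ball.

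The main obstacle is the local-to-differentiable step done \emph{quantitatively and in arbitrary normed spaces}: one must convert the segment-wise (essentially one-dimensional) estimates into a genuine Fr\'echet-differentiability statement with a modulus, without any compactness or reflexivity. The standard trick is to use the two-sided bound to show that $f$ has, at each point, both an upper and a lower ``approximate gradient'' in the sense of supporting/super-porting affine functions with error $\|h\|\omega(\|h\|)$, and that these two must coincide and serve as $f'(x)$; checking that $f'$ is then well-defined and that the coincidence is uniform (so $f'$ really does have the claimed modulus) is where care is needed. I would follow the arguments of \cite{HJ} for the skeleton here but streamline them: the key inequality is that $\omega$-semiconvexity gives $f(x+h) \ge f(x) + \ell(h) - \|h\|\omega(\|h\|)$ for a suitable linear $\ell$ obtained as a limit of slopes, $\omega$-semiconcavity gives the reverse with another linear functional, and subtracting forces the two functionals to be within $2\omega(\|h\|)$-type distance of each other for all small $h$, hence equal; a short computation then transfers this to the difference $f'(x)-f'(y)$ via the triangle inequality along the segment, and the geometric constants from the two cases enter exactly at the point where one bounds how far one must extrapolate. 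A secondary, more routine difficulty is bookkeeping the numerical constants so that they come out as $6e_G$ and $12(1+1/r)$ rather than something larger; I would handle this by optimizing the choice of the auxiliary points (e.g. how far into the ball or cone one reaches) at the end rather than along the way.
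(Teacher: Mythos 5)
Your overall architecture coincides with the paper's: the two supporting functionals coming from $\omega$-semiconvexity and $\omega$-semiconcavity give Fr\'echet differentiability together with the uniform first-order estimate \eqref{ssapr} (this is exactly Lemma \ref{apr}), and for part (i) your idea of scaling the inner ball into the segment (transversal room of order $r_0\|h\|/\diam G$ near the midpoint) is precisely the paper's choice $y=\lambda y_0+(1-\lambda)s$, $r=\lambda r_0$, $\lambda=\|h\|/\diam G$. For part (ii) the paper is simpler than your direct cone argument: it just applies (i) to $B:=G\cap B(a,(1+r)n)$ with $n$ chosen so that $x,x+h\in\overline B(a,n)$, noting $e_B\le 2(1+1/r)$.

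The genuine gap is the central quantitative step, the analogue of Lemma \ref{le_l}: how to pass from \eqref{ssapr} to a bound on the full norm $\|f'(x)-f'(y)\|$. Subtracting the two Taylor estimates at $x$ and $y$ evaluated at each other controls only the directional quantity $|(f'(x)-f'(y))(x-y)|$, and your suggested ``triangle inequality along the segment'' is exactly what fails for a general modulus: chaining over a subdivision produces bounds of the type $n\,\omega(\|h\|/n)$, which is not $O(\omega(\|h\|))$ unless $\omega$ is essentially linear (take $\omega(t)=\sqrt t$). What is needed is a one-shot estimate: test \eqref{ssapr} at both points against the common family $\tfrac12(x+y+w)$, $\|w\|=r$, where a ball of radius $r$ is available, and take the supremum over $w$ to recover the operator norm. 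This produces the cross term $f(x)-f(y)+\tfrac12 f'(x)(y-x)-\tfrac12 f'(y)(x-y)$, which the paper eliminates by the symmetry $f\mapsto -f$ (the term changes sign); if instead you bound it crudely by \eqref{ssapr} you pick up $\omega$ evaluated at distances up to roughly $\tfrac32\|h\|$, which is not majorized by $C\,\omega(\|h\|)$ for a general $\omega\in\M$ (no concavity or doubling is assumed), and in any case the stated constants $6e_G$ and $12(1+1/r)$ cannot be recovered by ``optimizing at the end'' without this symmetrization device. So the proposal asserts, but does not prove, the key lemma, and the hints offered for it would not survive a general modulus.
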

\begin{remark}\label{kon}
\begin{enumerate}
\item[(i)] The proof of \cite[Corollary 126]{HJ} gives 
  the multiplicative constants 15 and 30 instead of our constants
6 and 12.
\item[(ii)] In the special cases when $G=X$ or $\omega$ is linear,
 we can assert that $f$ is $C_*^{1, 4 \omega}$-smooth
  (see Corollary \ref{cepr} and Remark \ref{specimpl}).
\end{enumerate}
\end{remark}
In the last Section \ref{posl} we show that
our results have immediate consequences (or reformulations) concerning 
\begin{enumerate}
\item[(i)]
 a first-order quantitative 
 converse Taylor theorem  and
\item[(ii)]
 the problem whether 
$f\in C^{1,\omega}(G)$  whenever $f$ is continuous and $C^{1,\omega}_*$-smooth on all lines.
\end{enumerate}
{\it We hope that these versions of our new results can be interesting also for readers which are not interested in $\omega$-semiconvex functions.}

At the end of Section \ref{posl}, a natural open question is formulated.

\section{Preliminaries}

\subsection{Basic definitions}
In the following, $X$ will be always a real normed linear space and $X^*$ its dual space.
The norm and the origin in any normed linear space will be denoted
 by $\|\cdot\|$ and $0$, respectively. We consider real Hilbert spaces which can be finite-dimensional. By the derivative $f'$ of a
 function $f$ defined on a subset of $X$ we always mean the Fr\' echet derivative and the notion
 of $C^1$-smoothness has the standard sense. 
 The symbol $B(x,r)$ ($\overline B(x,r)$) denotes the open (closed) ball with centre $x$ and radius $r$ in $X$.
 For $A \subset \R^n$, the symbols 
$\overline A$ and $\spa A$ denote the closure and the linear span of $A$, respectively.
By a  cone in $\R^n$ we mean a set $C \subset \R^n$ such that $\lambda x \in C$ for each
 $x \in C$ and $\lambda >0$. A ray in $\R^n$ is a set of the form $\rho= \{ a+tv:\ t\geq 0\}$,
 where $a \in \R^n$ and $0\neq v \in \R^n$. By the dimension $\dim (C)$ of a convex set
 $C \subset \R^n$ we mean the dimension of the affine hull of $C$ (see \cite[p. 12]{Roc}).

\subsection{$\omega$-semiconvexity and $\omega$-semiconcavity}
We will frequently use the obvious fact that if $f$ is $\omega$-semiconvex (resp. $\omega$-semiconcave) on $G$ and $c>0$, then
 the function $cf$ is $(c\omega)$-semiconvex (resp. $(c\omega)$-semiconcave).

Further, it follows easily from Definition \ref{omse} that $\omega$-semiconvexity is equivalent to  ``$\omega$-semiconvexity on all lines''. More precisely, the following statement  holds.

\begin{lemma}\label{pp}
Let $X$ be a normed linear space, $G\subset X$ an open convex set,
 $\omega \in \M$, and let $f$ be a continuous function on $G$. Then
 the following conditions are equivalent.
\begin{enumerate}
\item[(i)] $f$ is $\omega$-semiconvex on $G$.
\item[(ii)] For every $a \in G$ and $v\in X$, $\|v\|=1$, the function
 $f_{a,v}: t \mapsto f(a+tv)$ defined on the open interval
 $\{t\in \R:\ a+tv \in G\}$ is $\omega$-semiconvex.
\end{enumerate}
\end{lemma}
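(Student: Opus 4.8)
The plan is to read off both implications directly from Definition \ref{omse}, using the observation that the defining inequality only ever involves three collinear points $x$, $y$, $\lambda x+(1-\lambda)y$ of $G$ together with the scalar $\|x-y\|$, so it is genuinely a one-dimensional condition along the segment $[x,y]$.

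For (i) $\Rightarrow$ (ii): fix $a\in G$ and a unit vector $v\in X$, and write $I:=\{t\in\R:\ a+tv\in G\}$, which is an open interval since $G$ is open and convex. Given $s,t\in I$ and $\lambda\in[0,1]$, convexity of $I$ gives $\lambda s+(1-\lambda)t\in I$; setting $x:=a+sv$ and $y:=a+tv$ we have $\lambda x+(1-\lambda)y=a+(\lambda s+(1-\lambda)t)v$ and $\|x-y\|=|s-t|$ (here $\|v\|=1$ is used). Substituting into the $\omega$-semiconvexity inequality for $f$ gives exactly the corresponding inequality for $f_{a,v}$ at the points $s,t$, so $f_{a,v}$ is $\omega$-semiconvex; its continuity is inherited from that of $f$.

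For (ii) $\Rightarrow$ (i): take arbitrary $x,y\in G$ and $\lambda\in[0,1]$. If $x=y$ the inequality is trivial, so assume $x\neq y$ and put $a:=x$, $v:=(y-x)/\|y-x\|$, so that $x=a+0\cdot v$ and $y=a+\|y-x\|\,v$, and both $0$ and $\|y-x\|$ lie in the interval $I$ associated with $a,v$. Since $\lambda x+(1-\lambda)y=a+\big((1-\lambda)\|y-x\|\big)v$ and $(1-\lambda)\|y-x\|=\lambda\cdot 0+(1-\lambda)\cdot\|y-x\|$, applying $\omega$-semiconvexity of $f_{a,v}$ at the points $0$ and $\|y-x\|$ with weight $\lambda$, and noting $|\|y-x\|-0|=\|x-y\|$, yields
$$ f(\lambda x+(1-\lambda)y)\le \lambda f(x)+(1-\lambda)f(y)+\lambda(1-\lambda)\|x-y\|\,\omega(\|x-y\|), $$
which is (i).

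I do not expect any real obstacle: the entire content is the bookkeeping of the affine parametrization $t\mapsto a+tv$ of a segment, and the only points that deserve an explicit word are that $I$ is a convex open interval and that the normalization $\|v\|=1$ turns the conversion between Euclidean distance in $X$ and distance in the parameter $t$ into an identity.
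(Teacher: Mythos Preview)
Your argument is correct and is exactly the direct verification from Definition~\ref{omse} that the paper has in mind; the paper does not write out a proof but only remarks that the equivalence ``follows easily from Definition~\ref{omse}'', and your bookkeeping with the affine parametrization $t\mapsto a+tv$ is precisely how one unpacks that remark.
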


\begin{remark}\label{lehimpl}
\hfill
\begin{enumerate}
\item[(i)]
Using Lemma \ref{pp}, it is easy to see that, to prove  implications \eqref{kls} and \eqref{limpl},
 it is sufficient to prove them for $X=\R$. The validity of \eqref{kls} for $X= \R$ is almost 
obvious. Indeed, the assumption of \eqref{kls} implies that $g(x):= f(x) + \frac{C}{2} x^2$
 is convex on $G$, since $g'(x) = f'(x) + Cx$ is clearly nondecreasing on the interval $G$.
And  the validity of \eqref{limpl} for $X=\R$ immediately follows
  e.g. from \cite[Proposition 2.1.2]{CaSi}.
	\item[(ii)]
	Using Lemma \ref{pp}, Remark \ref{H} and \eqref{kls}, we obtain that if $\omega$ is linear,
	 then we can assert in \eqref{limpl} even that $f$ is both $\tilde \omega$-semiconvex and
	 $\tilde \omega$-semiconcave, where $\tilde \omega:= \frac{1}{2} \omega$. However, for general modulus it is not true even for $X=\R$ and $\tilde \omega:= C \omega$, where $C<1$ is any absolute constant. Indeed, if $0<C<1$ is given, we choose $p \in (1,2)$ such that
	 $C< 1/p$ and set $\omega(t):= t^{p-1}$, $t\geq 0$,  and $f(x):= x^p/p,\ x \in (0,1)$.
	 Then $f'(x)= \omega(x),\ x\in (0,1),$ and consequently  the concavity of $\omega$ implies that $f$ is $C_*^{1,\omega}$-smooth. However (as observed  in \cite[ Remark 2.10 (ii)]{Kr2}), it is not difficult to show that
	 $f$ is not $(C\omega)$-semiconcave.
	\end{enumerate}
	\end{remark}
	We will need the following fact which is an easy consequence of \eqref{limpl}
	 and Lemma \ref{pp}.
	
	\begin{lemma}\label{post}
Let $X$ be a normed linear space,  $ G \subset X $  an open convex set, $ \omega\in\M $ and let  $f$ be a Fr\' echet differentiable function on $G$.
Suppose that
\begin{equation}\label{smder}
\vert f'(y)(y-x)-f'(x)(y-x)\vert\leq \|y-x\| \omega(\| y-x\|),\quad x,y\in G.
\end{equation}
Then $f$ is both $\omega$-semiconvex and $\omega$-semiconcave on $G$.
\end{lemma}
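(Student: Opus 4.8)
\textbf{Proof plan for Lemma \ref{post}.}

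The plan is to reduce the assertion to the one-dimensional case via Lemma \ref{pp}, and then to combine the hypothesis \eqref{smder} with the easy implication \eqref{limpl}. First I would fix $a \in G$ and a unit vector $v \in X$, and consider the restriction $g := f_{a,v}$, i.e. $g(t) = f(a+tv)$ on the open interval $I := \{t : a+tv \in G\}$. Since $f$ is Fr\'echet differentiable, $g$ is differentiable on $I$ with $g'(t) = f'(a+tv)(v)$. The goal, by Lemma \ref{pp} (applied to $f$ and to $-f$), is to show that $g$ is both $\omega$-semiconvex and $\omega$-semiconcave on $I$.

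Next I would verify that $g$ satisfies the hypothesis of \eqref{smder} in dimension one. For $s, t \in I$ with, say, $s < t$, put $x := a+sv$ and $y := a+tv$, so that $y - x = (t-s)v$ and $\|y-x\| = t-s$. Then $g'(t) - g'(s) = f'(y)(v) - f'(x)(v) = \frac{1}{t-s}\bigl(f'(y)(y-x) - f'(x)(y-x)\bigr)$, and \eqref{smder} gives
$$
|g'(t) - g'(s)| \le \frac{1}{t-s}\,\|y-x\|\,\omega(\|y-x\|) = \omega(t-s) = \omega(|t-s|).
$$
Thus $g'$ is uniformly continuous on $I$ with modulus $\omega$, i.e. $g \in C^{1,\omega}_*(I)$. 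Applying \eqref{limpl} with $X = \R$, $G = I$ (an open convex subset of $\R$), we conclude that $g$ is both $\omega$-semiconvex and $\omega$-semiconcave on $I$. Since $a$ and $v$ were arbitrary, Lemma \ref{pp} applied to $f$ yields that $f$ is $\omega$-semiconvex on $G$, and Lemma \ref{pp} applied to $-f$ (whose line restrictions are $-g$, and which also satisfies \eqref{smder}) yields that $f$ is $\omega$-semiconcave on $G$, completing the proof.

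There is no real obstacle here; the only point requiring a word of care is the passage from the estimate \eqref{smder}, which controls the \emph{directional} quantity $f'(y)(y-x) - f'(x)(y-x)$ along the segment $[x,y]$, to the statement that the restriction $g'$ has modulus of continuity $\omega$ on the one-dimensional domain $I$ — and this is exactly the computation displayed above, using that $y-x$ is a scalar multiple of the unit vector $v$. Everything else is a direct invocation of Lemma \ref{pp} and of \eqref{limpl}.
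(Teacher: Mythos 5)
Your proposal is correct and follows essentially the same route as the paper: reduce to lines via Lemma \ref{pp}, observe that \eqref{smder} applied to $x=a+t_1v$, $y=a+t_2v$ shows each restriction $f_{a,v}$ is $C^{1,\omega}_*$-smooth, invoke \eqref{limpl} in dimension one, and repeat for $-f$. The key computation you display is exactly the one in the paper's proof, so there is nothing to add.
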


\begin{proof}
We will first observe that if $f_{a,v}$ is as any function as in  Lemma \ref{pp} (ii), then
\begin{equation}\label{hlnap}
\text{$f_{a,v}$ is $C^{1,\omega}_*$-smooth.}
\end{equation}
Indeed, for each $t_1 \neq t_2$ from
 the domain of $f_{a,v}$ we can apply \eqref{smder} to  $x:=a+t_1 v$ and $y:=a+t_2 v$, and obtain
\begin{multline*}
 |(f_{a,v})'(t_2) - (f_{a,v})'(t_1)|= |f'(a+t_2 v)(v) - f'(a+t_1 v)(v)|\\
= \frac{1}{|t_2-t_1|} |f'(a+t_2 v)((t_2-t_1)v) - 
f'(a+t_1 v)((t_2-t_1)v)| \leq \omega(|t_2-t_1|),
 \end{multline*}
Therefore each $f_{a,v}$ is $\omega$-semiconvex by \eqref{limpl} and so
 $f$ is $\omega$-semiconvex by Lemma \ref{pp}.
Using the same argument to $-f$, we obtain that 
$f$ is also
 $\omega$-semiconcave.
\end{proof}

An other basic tool for us is the following essentially well-known result.

\begin{lemma}\label{apr}
Let $X$ be a normed linear space,  $ G \subset X $  an open convex set and $ \omega\in\M $.
Let a function $f$ be both $\omega$-semiconvex and $\omega$-semiconcave on $G$. Then $f$ is Fr\' echet differentiable on $G$ and
\begin{equation}\label{ssapr}
|f(y)-f(x)- f'(x)(y-x)| \leq \|y-x\| \, \omega(\|y-x\|),\ \ \  x, y \in G.
\end{equation}
\end{lemma}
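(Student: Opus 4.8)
The plan is to reduce everything to the one-dimensional situation via Lemma \ref{pp}, prove the statement on lines, and then recover Fréchet differentiability by a uniform estimate. First I would fix $a\in G$ and a unit vector $v$ and consider $f_{a,v}\colon t\mapsto f(a+tv)$, which by Lemma \ref{pp} is both $\omega$-semiconvex and $\omega$-semiconcave on an open interval $I\subset\R$. The key one-dimensional claim is that such a function is $C^{1,\omega}_*$-smooth on $I$; in fact one only needs that it is differentiable with an estimate of the form $|f_{a,v}(t)-f_{a,v}(s)-f_{a,v}'(s)(t-s)|\le |t-s|\,\omega(|t-s|)$ for $s,t\in I$. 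To see this on a compact subinterval, note that $\omega$-semiconvexity of $g:=f_{a,v}$ means the divided differences of $g$ are "almost monotone": for $s<t$ in $I$,
\begin{equation*}
\frac{g(t)-g(s)}{t-s}\le \frac{g(u)-g(t)}{u-t}+C\,\omega(\diam),\qquad s<t<u,
\end{equation*}
and $\omega$-semiconcavity gives the reverse inequality with a sign change; combining them one gets that the one-sided derivatives exist, coincide (so $g$ is differentiable), and that $g'$ satisfies $|g'(t)-g'(s)|\le c\,\omega(|t-s|)$ with an absolute constant $c$. From the mean value theorem applied to $g(\cdot)-g(s)-g'(s)(\cdot-s)$ one then deduces the displayed Taylor-type estimate for $g$ on lines with the constant $1$, after possibly absorbing $c$ into the modulus and re-optimising the elementary convexity/concavity inequalities so that the final constant is exactly $1$ (this is where one must be a little careful: a naive argument gives a constant larger than $1$, so one should instead argue directly from the two defining inequalities of $\omega$-semiconvexity and $\omega$-semiconcavity evaluated at $\lambda=1/2$, or integrate the sharp bound on $g'$).

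Next I would lift this back to $G$. Differentiability: fix $x\in G$. For each unit $v$ the directional derivative $g_v:=(f_{x,v})'(0)$ exists, and the one-dimensional estimate gives
\begin{equation*}
|f(x+hv)-f(x)-h\,g_v|\le |h|\,\omega(|h|),\qquad 0<|h|<\dist(x,\partial G).
\end{equation*}
The map $v\mapsto g_v$ is positively homogeneous; to get Fréchet differentiability I would show it is linear and that the remainder is uniform in $v$. Linearity follows because the inequality above, applied along the segment from $x$ to an arbitrary nearby $y=x+u$ with $v=u/\|u\|$, $h=\|u\|$, yields
\begin{equation*}
|f(y)-f(x)-g_{u/\|u\|}\,\|u\|\,|\le \|u\|\,\omega(\|u\|),
\end{equation*}
and the right-hand side is $o(\|u\|)$; a function admitting an $o(\|u\|)$ approximation by a positively homogeneous function $T(u):=\|u\|\,g_{u/\|u\|}$ forces $T$ to be linear (this is a standard fact: the first-order approximation, if it exists, is automatically linear). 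Setting $f'(x):=T$ gives exactly \eqref{ssapr} on the ball $B(x,\dist(x,\partial G))$; to extend \eqref{ssapr} to all $x,y\in G$ one uses that $G$ is convex, so the whole segment $[x,y]\subset G$, and one can run the one-dimensional estimate along that segment directly, obtaining \eqref{ssapr} globally.

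The main obstacle is the one-dimensional lemma with the sharp constant $1$, i.e. passing from "the divided differences of an $\omega$-semiconvex function are almost monotone" to the clean Taylor inequality \eqref{ssapr} without losing a multiplicative constant. I expect the cleanest route is: (a) show $g$ is differentiable with $g'$ satisfying a modulus-of-continuity bound (any constant will do here, since \eqref{limpl} and \eqref{hlnap} only need $C^{1,\omega}_*$-smoothness up to a constant), and then (b) obtain the constant $1$ in \eqref{ssapr} not from $g'$ but directly: write the defining inequality of $\omega$-semiconvexity with $\lambda\to 1$ (i.e. $y$ fixed, $x$ the far endpoint) to get $f(y)-f(x)-f'(x)(y-x)\le \|y-x\|\,\omega(\|y-x\|)$, and the $\omega$-semiconcavity inequality for the reverse bound. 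Everything else is routine: Lemma \ref{pp}, convexity of $G$, and the elementary fact that an $o(\|u\|)$ first-order approximation is unique and linear.
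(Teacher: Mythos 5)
Your reduction to lines via Lemma \ref{pp}, and your observation that the constant $1$ in \eqref{ssapr} comes for free by letting $\lambda\to 1$ in the defining inequalities once directional derivatives exist, are both fine. The genuine gap is the step where you recover Fr\'echet differentiability: you set $T(u):=\|u\|\,g_{u/\|u\|}$ and invoke ``a standard fact'' that a function admitting an $o(\|u\|)$ approximation by a positively homogeneous map forces that map to be linear. The standard fact concerns uniqueness of an approximation that is \emph{assumed} linear; a positively homogeneous (even odd and continuous) first-order approximant need not be linear. For example, $T(u_1,u_2)=u_1^3/(u_1^2+u_2^2)$, $T(0)=0$, is continuous, odd and positively homogeneous of degree one on $\R^2$, and $f:=T$ admits $T$ as an exact (zero-remainder) first-order approximation at the origin, yet $T$ is not linear. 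So the linearity (and the continuity, i.e.\ membership in $X^*$) of $v\mapsto g_v$ is precisely the crux of the lemma: it cannot follow from the one-dimensional estimates alone and must be extracted from the several-variable $\omega$-semiconvexity and $\omega$-semiconcavity, which your plan never uses beyond its restriction to single lines.

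For comparison, the paper obtains this linear functional at the outset from Rolewicz's theorem: $\omega$-semiconvexity gives, at each $x\in G$, some $\Phi_x\in X^*$ with $f(x+h)-f(x)-\Phi_x(h)\ge -\|h\|\,\omega(\|h\|)$ for all $x+h\in G$; applying this to $-f$ gives $\Psi_x$, and adding the two inequalities forces $\Psi_x=-\Phi_x$ (the left side is linear in $h$, the right side $o(\|h\|)$), whence the two-sided bound, $f'(x)=\Phi_x$, and \eqref{ssapr} with constant $1$. If you wish to keep your route instead, you must replace the false ``standard fact'' by an argument that $T$ is additive: from the midpoint case $\lambda=\tfrac12$ of $\omega$-semiconvexity applied to $a=x+2tv_1$, $b=x+2tv_2$ one gets
\begin{equation*}
\frac{f(x+t(v_1+v_2))-f(x)}{t}\le\frac{f(x+2tv_1)-f(x)}{2t}+\frac{f(x+2tv_2)-f(x)}{2t}+\tfrac12\|v_1-v_2\|\,\omega\bigl(2t\|v_1-v_2\|\bigr),
\end{equation*}
and letting $t\to 0+$ yields $T(v_1+v_2)\le T(v_1)+T(v_2)$, while $\omega$-semiconcavity gives the reverse inequality; together with oddness and positive homogeneity this makes $T$ linear, and boundedness follows from the continuity of $f$ combined with your directional estimate. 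With that ingredient (or with Rolewicz's supporting-functional theorem) supplied, the rest of your plan goes through.
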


\begin{proof} 
A proof in the case $X=\R^n$ can be found in \cite{CaSi} (see the proof of
 \cite[(3.23), p. 60]{CaSi}). The proof of the general case is factually  contained
 in the proof of \cite[Theorem 2.6]{Kr}.  
For the convenience  of the reader, we shortly repeat the argument from \cite{Kr}.
 It is based on \cite[Theorem 3]{Rol2} (cf. also \cite[Proposition 2]{Rol3})
 which easily implies (cf. \cite{Kr} or \cite[Corollary 4.5]{DZ1}) that
 $\omega$-semiconvexity of $f$ implies that for each $x\in G$ there exists $\Phi_x \in X^*$
 such that 
 $$  
f(x+h) - f(x)- \Phi_x(h) \geq  - \|h\| \omega(\|h\|)\ \ \text{whenever}\ \ \ x+h \in G.$$
Since $-f$ is also $\omega$-semiconvex, for each $x\in G$ there exists $\Psi_x \in X^*$
 such that 
 $$  
-f(x+h) + f(x)- \Psi_x(h) \geq  - \|h\| \omega(\|h\|)\ \ \text{whenever}\ \ \ x+h \in G.$$
 Adding these two inequalities, we easily obtain that  $\Psi_x= - \Phi_x$ and
$$  
|f(x+h) - f(x)- \Phi_x(h)| \leq   \|h\| \omega(\|h\|)\ \ \text{whenever}\ \ \ x+h \in G,$$
which implies that $f'(x)= \Phi_x$ and so \eqref{ssapr} holds.
\end{proof}
 Lemma \ref{apr} and Remark \ref{H} immediately imply the following well-known
 result. 
\begin{corollary}\label{aprhil}
Let $X$ be a Hilbert space, $G\subset X$ an open convex set, and let $f$ be a function which is both
 semiconvex and semiconcave with constant $C>0$ on $G$. Then $f$ is Fr\' echet differentiable on $G$ and
\begin{equation}\label{ssaprhil}
|f(y)-f(x)- f'(x)(y-x)| \leq  \frac{C}{2}\|y-x\|^2 ,\ \ \  x, y \in G.
\end{equation}
\end{corollary}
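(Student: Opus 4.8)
The plan is to deduce Corollary \ref{aprhil} directly from Lemma \ref{apr} by feeding it the appropriate modulus. First I would invoke Remark \ref{H}: since $X$ is a Hilbert space, $f$ being semiconvex on $G$ with constant $C$ is equivalent to $f$ being $\omega$-semiconvex with the linear modulus $\omega(t) := \frac{C}{2}\,t$, $t \ge 0$. Applying this same equivalence to $-f$ shows that $f$ being semiconcave on $G$ with constant $C$ is equivalent to $f$ being $\omega$-semiconcave with this same $\omega$. Since $\omega(t) = \frac{C}{2}\,t$ is nondecreasing and satisfies $\lim_{t \to 0+}\omega(t) = 0$, we have $\omega \in \M$, so all hypotheses of Lemma \ref{apr} are in force.

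Next I would simply apply Lemma \ref{apr} with this $\omega$. It gives that $f$ is Fr\'echet differentiable on $G$ and that
$$
|f(y) - f(x) - f'(x)(y-x)| \le \|y-x\|\,\omega(\|y-x\|) = \frac{C}{2}\,\|y-x\|^2
$$
for all $x, y \in G$, which is exactly \eqref{ssaprhil}. The equality on the right is just the arithmetic $\|h\|\cdot\tfrac{C}{2}\|h\| = \tfrac{C}{2}\|h\|^2$.

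There is essentially no obstacle: the whole content sits in Lemma \ref{apr} (and, one level deeper, in the Rolewicz-type one-sided support estimate used to prove it). The only minor points worth a sentence in the write-up are that the equivalence in Remark \ref{H}, although stated only for semiconvexity, transfers to semiconcavity by passing to $-f$, and the routine check that $\omega(t) = \frac{C}{2}t$ belongs to $\M$ so that Lemma \ref{apr} genuinely applies. Hence the corollary follows in a couple of lines.
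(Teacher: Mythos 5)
Your proposal is correct and is exactly the paper's argument: the corollary is stated as an immediate consequence of Lemma \ref{apr} combined with Remark \ref{H} (with the linear modulus $\omega(t)=\frac{C}{2}t$), just as you do. Your extra remarks about passing to $-f$ for semiconcavity and checking $\omega\in\M$ are routine but harmless.
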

The following result (which clearly implies that the converse of Corollary \ref{aprhil} holds)        is also essentially well-known, but we prefer to present a simple proof.
\begin{lemma}\label{aprc} 
If $X$, $G$, $\omega$ are as in Lemma \ref{apr} and
$f$ satisfies \eqref{ssapr}, then $f$ is both $(
 2\omega)$-semiconvex and $(2\omega)$-semiconcave. Moreover, if $\omega$ is linear,
 then $f$ is both
 $\omega$-semiconvex and $\omega$-semiconcave. 
\end{lemma}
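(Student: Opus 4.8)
The strategy is to reduce to the one-dimensional case via Lemma \ref{pp}, exactly as was done in the proof of Lemma \ref{post}. First I would fix $a\in G$ and $v\in X$ with $\|v\|=1$ and consider the restriction $f_{a,v}(t)=f(a+tv)$ on the open interval $I=\{t:\ a+tv\in G\}$. Since $f$ satisfies \eqref{ssapr}, Lemma \ref{apr}'s reasoning already tells us $f$ is Fr\'echet differentiable (in fact \eqref{ssapr} is a stronger hypothesis than needed for differentiability), so $f_{a,v}$ is differentiable with $(f_{a,v})'(t)=f'(a+tv)(v)$. Plugging $x=a+t_1v$, $y=a+t_2v$ into \eqref{ssapr} gives, for $t_1,t_2\in I$,
\begin{equation*}
|f_{a,v}(t_2)-f_{a,v}(t_1)-(f_{a,v})'(t_1)(t_2-t_1)|\leq |t_2-t_1|\,\omega(|t_2-t_1|).
\end{equation*}
So the whole lemma reduces to the claim: if $g$ is a differentiable function on an open interval $I$ satisfying $|g(s)-g(t)-g'(t)(s-t)|\leq |s-t|\,\omega(|s-t|)$ for all $s,t\in I$, then $g$ is both $(2\omega)$-semiconvex and $(2\omega)$-semiconcave, and $\omega$-semiconvex/concave when $\omega$ is linear; then Lemma \ref{pp} lifts this back to $f$ on $G$.

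For the one-dimensional claim I would argue as follows. Fix $t_1<t_2$ in $I$, write $h=t_2-t_1$, and for $\lambda\in[0,1]$ set $z=\lambda t_1+(1-\lambda)t_2$. Apply the hypothesis twice, once with base point $t=z$ and target $s=t_1$, once with $s=t_2$; multiplying the first by $\lambda$ and the second by $1-\lambda$ and adding, the two occurrences of $g'(z)$ cancel (since $\lambda(t_1-z)+(1-\lambda)(t_2-z)=0$) and one gets
\begin{equation*}
\lambda g(t_1)+(1-\lambda)g(t_2)-g(z)\leq \lambda|t_1-z|\,\omega(|t_1-z|)+(1-\lambda)|t_2-z|\,\omega(|t_2-z|).
\end{equation*}
Now $|t_1-z|=(1-\lambda)h\leq h$ and $|t_2-z|=\lambda h\leq h$, and $\omega$ is nondecreasing, so the right-hand side is at most $[\lambda(1-\lambda)+(1-\lambda)\lambda]\,h\,\omega(h)=2\lambda(1-\lambda)h\,\omega(h)$, which is exactly the $(2\omega)$-semiconvexity inequality for $g$. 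Applying the same to $-g$ gives $(2\omega)$-semiconcavity. When $\omega$ is linear, $\omega(t)=Ct$, the right-hand side of the displayed inequality is $C[\lambda(1-\lambda)^2+(1-\lambda)\lambda^2]h^2=C\lambda(1-\lambda)h^2=\lambda(1-\lambda)h\,\omega(h)$, giving the sharp constant without the factor $2$.

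The only mildly delicate point is the bookkeeping with $|t_1-z|$ and $|t_2-z|$ and making sure the cancellation of $g'(z)$ is used correctly; everything else is routine. I expect no real obstacle here — the factor-$2$ loss is genuinely the cost of bounding two terms each by $h\,\omega(h)$ rather than by the smaller arguments $(1-\lambda)h$ and $\lambda h$, and the linear case is sharp precisely because there $\omega$ is additive enough that $\omega((1-\lambda)h)+\text{(scaled)}$ recombines exactly.
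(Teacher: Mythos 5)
Your proof is correct, but it follows a genuinely different route from the paper's. The paper symmetrizes \eqref{ssapr} (swapping the roles of $x$ and $y$ and adding) to obtain the derivative-increment bound \eqref{smder} with $2\omega$ in place of $\omega$, then reuses the computation from the proof of Lemma \ref{post} to conclude that every line restriction $f_{a,v}$ is $C^{1,2\omega}_*$-smooth, invokes \eqref{limpl} on each line, and lifts back via Lemma \ref{pp}; the sharp constant for linear $\omega$ comes from Remark \ref{lehimpl}(ii). You instead verify the defining inequality of Definition \ref{omse} directly from the Taylor-type bound \eqref{ssapr}, applying it at the convex combination point and exploiting the cancellation $\lambda(t_1-z)+(1-\lambda)(t_2-z)=0$; this is more elementary (it bypasses \eqref{smder}, \eqref{limpl} and the one-dimensional input from Cannarsa--Sinestrari), it yields the linear case by the exact identity $\lambda(1-\lambda)^2+\lambda^2(1-\lambda)=\lambda(1-\lambda)$ rather than by a separate remark, and in fact your reduction to lines via Lemma \ref{pp} is not even needed: the same two applications of \eqref{ssapr} at $z=\lambda x+(1-\lambda)y$ work verbatim in $X$. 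One small labelling slip: with the convention of Definition \ref{omse}, the displayed inequality $\lambda g(t_1)+(1-\lambda)g(t_2)-g(z)\leq\cdots$ is the semiconcavity-type direction, not semiconvexity; but since your estimate really holds in absolute value (and you also apply it to $-g$), both conclusions follow and nothing is lost.
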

\begin{proof}
For each $x \in G$, $y\in G$, we can apply \eqref{ssapr} for  $x:=y$ and $y:=x$ and obtain
$$|f(x)-f(y)- f'(y)(x-y)| \leq \|x-y\| \, \omega(\|x-y\|).$$
 Adding this inequality with \eqref{ssapr} and using the triangle inequality, we obtain
 $$\vert f'(y)(y-x)-f'(x)(y-x)\vert\leq 2\|y-x\| \omega(\| y-x\|).$$
Since we have proved that \eqref{smder} implies \eqref{hlnap}, we obtain
 that each function $f_{a,v}$ is $C_*^{1,2\omega}$-smooth and consequently
 both $(2\omega)$-semiconvex and $(2\omega)$-semiconcave by \eqref{limpl} (and even
 both $\omega$-semiconvex and $\omega$-semiconcave by Remark \ref{lehimpl} (ii)
 if $\omega$ is linear). So the assertions of the lemma folow by Lemma \ref{pp}.
\end{proof}

We will need also the following  special case of \cite[Lemma 2.7]{DZ2}.
\begin{lemma}\label{skl}
 Let  $\omega \in \M$ and
  $n\geq k\geq2$. Let $\emptyset \neq G \subset \R^n$ be an open convex set, and let
 $L: \R^n \to \R^k$ be a linear surjection. 
 Let $f$ be a function on $L(G)$ which is both $\omega$-semiconvex and $\omega$-semiconcave on $L(G)$.
 Then there exists $C>0$ such that the function  $ f \circ L$ is both $(C\omega)$-semiconvex and $(C\omega)$-semiconcave on 
 $G$.
\end{lemma}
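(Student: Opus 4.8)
The plan is to reduce the statement about an arbitrary linear surjection $L:\R^n\to\R^k$ to two elementary cases and then compose. First I would observe that any linear surjection $L$ factors (after a linear change of coordinates on $\R^n$) as $L = \Lambda \circ P$, where $P:\R^n\to\R^k$ is the canonical coordinate projection $(x_1,\dots,x_n)\mapsto(x_1,\dots,x_k)$ and $\Lambda:\R^k\to\R^k$ is a linear isomorphism. Composition with a linear isomorphism is harmless: if $h$ is $\omega$-semiconvex on a convex set $U\subset\R^k$, then $h\circ\Lambda$ is $(\|\Lambda\|\,\omega(\|\Lambda\|\,\cdot))$-semiconvex on $\Lambda^{-1}(U)$, and since $\omega\in\M$ is non-decreasing one can bound $\omega(\|\Lambda\|\,t)$ by $\lceil\|\Lambda\|\rceil\,\omega(t)$ using the standard subadditivity-type estimate for non-decreasing moduli (valid up to the integer-ceiling factor). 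Hence it suffices to prove the lemma for the coordinate projection $P$, and the constant $C$ accumulates multiplicatively across the two steps. Likewise the linear change of coordinates bringing $L$ into the form $\Lambda\circ P$ is itself a linear isomorphism of $\R^n$, which again only costs a bounded modulus factor.

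For the projection $P:\R^n\to\R^k$, write a point of $\R^n$ as $(x,y)$ with $x\in\R^k$, $y\in\R^{n-k}$, so $(f\circ P)(x,y)=f(x)$. The key step is to verify the defining inequality of Definition \ref{omse} directly. Take two points $(x_1,y_1),(x_2,y_2)\in G$ and $\lambda\in[0,1]$; abbreviate $u=\lambda x_1+(1-\lambda)x_2$ and note $\lambda(x_1,y_1)+(1-\lambda)(x_2,y_2)$ projects to $u$. Since $f$ is $\omega$-semiconvex on the convex set $P(G)$ and $x_1,x_2\in P(G)$,
\[
f(u)\le\lambda f(x_1)+(1-\lambda)f(x_2)+\lambda(1-\lambda)\|x_1-x_2\|\,\omega(\|x_1-x_2\|).
\]
Because $\|x_1-x_2\|=\|P(x_1,y_1)-P(x_2,y_2)\|\le\|(x_1,y_1)-(x_2,y_2)\|$ and $\omega$ is non-decreasing, the right-hand side is at most $\lambda(f\circ P)(x_1,y_1)+(1-\lambda)(f\circ P)(x_2,y_2)+\lambda(1-\lambda)\|(x_1,y_1)-(x_2,y_2)\|\,\omega(\|(x_1,y_1)-(x_2,y_2)\|)$, which is exactly the $\omega$-semiconvexity inequality for $f\circ P$ on $G$. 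Applying the same computation to $-f$ gives $\omega$-semiconcavity of $f\circ P$. So for the projection one even gets $C=1$; only the isomorphism steps force $C>1$, and tracking them yields an admissible $C$ depending on $L$ alone.

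The main obstacle, and the only genuinely non-routine point, is the factorization $L=\Lambda\circ P$ together with the bookkeeping of how the modulus transforms under a linear isomorphism. One must be careful that $\omega\in\M$ is merely non-decreasing with $\omega(0+)=0$ — not concave or subadditive — so the passage from $\omega(ct)$ to a constant multiple of $\omega(t)$ for $c>1$ must be done via the elementary bound $\omega(ct)\le\lceil c\rceil\,\omega(t)$ for $t\ge0$ (split $[0,ct]$ into $\lceil c\rceil$ subintervals of length $\le t$ and use monotonicity of $\omega$ on each, as in the standard argument). Once this estimate is in place, the constant $C$ in the statement can be taken to be, say, $(\lceil\|\Lambda\|\rceil)\cdot(\text{the analogous factor for the coordinate change on }\R^n)$, and the proof concludes by chaining the three reductions (coordinate change on $\R^n$, projection $P$, isomorphism $\Lambda$ on $\R^k$). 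Everything else is a direct substitution into Definition \ref{omse}, so no appeal to differentiability or to Lemma \ref{apr} is needed.
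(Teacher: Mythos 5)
Your reduction to a coordinate projection composed with linear isomorphisms is reasonable, and the projection step itself is correct (there $C=1$, using that the projection is $1$-Lipschitz and that $t\mapsto t\,\omega(t)$ is non-decreasing). The genuine gap is the modulus-scaling step for the isomorphisms: the inequality $\omega(ct)\le\lceil c\rceil\,\omega(t)$ is \emph{false} for a general $\omega\in\M$. The ``split $[0,ct]$ into $\lceil c\rceil$ subintervals'' argument proves such bounds only for subadditive functions (e.g.\ concave moduli, or the minimal modulus of continuity of a function on a convex set); mere monotonicity gives nothing, because there are no function increments to telescope. Concretely, $\omega(t)=t^2$ belongs to $\M$ and $\omega(2t)=4\,\omega(t)>\lceil 2\rceil\,\omega(t)$; worse, for $\omega(t)=e^{-1/t}$ (with $\omega(0)=0$) one has $\omega(2t)/\omega(t)=e^{1/(2t)}\to\infty$ as $t\to0+$, and for $\omega(t)=e^{t}-1$ the ratio is unbounded as $t\to\infty$, so no constant multiple of $\omega$ dominates $\omega(\Vert\Lambda\Vert\,\cdot)$ at all. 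Hence your chain only shows that $f\circ L$ is semiconvex and semiconcave with modulus $t\mapsto C'\,\omega(C't)$, which is not of the form $C\omega$, and the lemma as stated (for arbitrary $\omega\in\M$) is not proved. Your argument does work verbatim when $\omega$ is concave (hence subadditive), which happens to cover the only application in this paper (Lemma \ref{L}), but not the statement itself. Note also that the paper gives no proof here: it quotes the lemma as a special case of \cite[Lemma 2.7]{DZ2}.

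The gap cannot be closed by a better elementary inequality for $\omega$ alone; one must use the hypothesis on $f$ -- contrary to your closing claim that no appeal to differentiability or to Lemma \ref{apr} is needed. A workable repair along your lines: by Lemma \ref{pp} it suffices to treat restrictions of $f\circ L$ to lines. If $Lv=0$ the restriction is constant; otherwise $(f\circ L)_{a,v}(t)=f_{La,w}(ct)$ with $w=Lv/\Vert Lv\Vert$ and $c=\Vert Lv\Vert\le\Vert L\Vert$, and by Lemma \ref{pp} together with Remark \ref{specimpl}(ii) the derivative of $f_{La,w}$ has increments at most $2\,\omega$ of the step length. Now apply your telescoping to this \emph{derivative}: splitting the segment of length $c\vert t-s\vert$ into $\lceil c\rceil$ pieces of length at most $\vert t-s\vert$ gives that the derivative of $(f\circ L)_{a,v}$, namely $t\mapsto c\,(f_{La,w})'(ct)$, has increments at most $2c\lceil c\rceil\,\omega(\vert t-s\vert)\le 2\Vert L\Vert\lceil\Vert L\Vert\rceil\,\omega(\vert t-s\vert)$. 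This telescoping is legitimate because it is applied to increments of a function, whose minimal modulus is subadditive, even though $\omega$ is not. Then \eqref{limpl} (or Lemma \ref{post}) and Lemma \ref{pp} yield that $f\circ L$ is both $(C\omega)$-semiconvex and $(C\omega)$-semiconcave with $C=2\Vert L\Vert\lceil\Vert L\Vert\rceil$.
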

(Note that by well-known facts, $L(G)$ is an open convex set.)

\subsection{Recession cones}
In the proof of our main result, we will use some properties of recession cones (called sometimes also
 asymptotic cones) of unbounded convex sets  $A \subset \R^n$. We refer to the exposition in \cite{Roc},
 however we denote the recession cone of $A$ by $\rec(A)$ (and not $0^+A$ as in \cite{Roc}).
\begin{definition}\label{reccon}
Let $\emptyset \neq A \subset \R^n$ be a convex set. By the recession cone of $A$ we mean
 the set $\rec(A)$ of all $y \in \R^n$ such that $x+ \lambda y \in A$ for each $x \in A$ and $\lambda \geq 0$.
\end{definition}
We will need the following two lemmas on open convex sets  which easily follow from results proved
 in \cite{Roc} for closed convex sets.

\begin{lemma}\label{zvrc}
Let $\emptyset \neq G \subset \R^n$ be an open convex set. Then the following assertions hold.
\begin{enumerate}
\item[(i)]  $\rec(G) = \rec(\overline G)$.
\item[(ii)] $\rec(G)$ is a closed convex cone in $\R^n$ and $0\in \rec(G)$.
\item[(iii)] $\rec(G) = \{ y \in \R^n:\ \exists x \in G\  \forall \lambda \geq 0: 
\ x + \lambda y \in G\}$.
\item[(iv)] If $G$ is unbounded then $\rec(G) \setminus \{0\} \neq \emptyset$.
\item[(v)] $\dim (\rec(G)) = \dim (\spa(\rec(G)))$.  
\item[(vi)] $\dim (\rec(G)) = n$ if and only if $G$ contains a translation of a cone
 with nonempty interior.
\end{enumerate}
\end{lemma}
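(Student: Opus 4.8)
The plan is to deduce everything from the corresponding facts for closed convex sets in \cite{Roc}, using only two simple topological observations about an open convex set $G \subset \R^n$: first, $\overline G$ is a closed convex set with $\interior(\overline G) = G$, and second, a half-line $\{x+\lambda y : \lambda \geq 0\}$ emanating from a point $x$ of an open convex set $G$ lies in $G$ as soon as it lies in $\overline G$ (this is the standard line-segment principle: if $x \in \interior C$ and $z \in \overline C$ then $[x,z) \subset \interior C$, applied along the ray). I would state these two observations at the start and then treat the six items in order.

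For (i): the inclusion $\rec(G) \subset \rec(\overline G)$ is immediate from the definition since $G \subset \overline G$ and $\rec(\overline G)$ is characterized by translating \emph{every} point of $\overline G$; conversely, if $y \in \rec(\overline G)$, then for any $x \in G \subset \overline G$ and $\lambda \geq 0$ we have $x+\lambda y \in \overline G$, and by the line-segment principle applied to the half-line from $x$ (whose every point lies in $\overline G$) we get $x+\lambda y \in G$, so $y \in \rec(G)$. Item (ii) now follows by combining (i) with \cite[Theorem 8.1 and Corollary 8.3.1]{Roc}, which say $\rec(\overline G)$ is a closed convex cone containing $0$. For (iii): the inclusion ``$\subset$'' is trivial; for ``$\supset$'', if some $x \in G$ has $x+\lambda y \in G$ for all $\lambda \geq 0$, then in particular $x + \lambda y \in \overline G$ for all $\lambda \geq 0$, so by \cite[Theorem 8.3]{Roc} (the analogous ``one point suffices'' statement for the closed set $\overline G$, which applies since $\overline G$ is closed convex) we get $y \in \rec(\overline G) = \rec(G)$. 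Item (iv): if $G$ is unbounded so is $\overline G$, and a closed convex set is bounded iff its recession cone is $\{0\}$ \cite[Theorem 8.4]{Roc}, so $\rec(\overline G) = \rec(G)$ contains a nonzero vector.

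For (v): write $K := \rec(G)$, a closed convex cone with $0 \in K$. Since $K$ is convex, $\dim K$ (the dimension of its affine hull) equals the dimension of $\conv(K) = K$; because $0 \in K$, the affine hull of $K$ is the linear subspace $\spa(K)$, whence $\dim K = \dim(\spa K)$. Finally (vi): if $\dim(\rec G) = n$, then $\spa(\rec G) = \R^n$ by (v), so $\rec G$ has nonempty interior (a convex set spanning $\R^n$ has nonempty interior); since $\rec G$ is a cone containing $0$, any relatively interior point gives $\rec G$ nonempty interior and $\rec G$ itself is then a cone with nonempty interior contained — after translation by any $x \in G$, using (iii) applied coordinatewise to a neighborhood — in $G$; more precisely, fix $x \in G$ and a ball $B(y_0, \rho) \subset \rec G$, then the set $x + \{t z : t \geq 0,\ z \in B(y_0,\rho)\}$ is a translated cone with nonempty interior, and each of its points lies in $G$ by (iii) (for $z \in B(y_0,\rho)$ one has $z \in \rec G$, so $x + tz \in G$ for all $t \ge 0$). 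Conversely, if $G$ contains a translate $a + C$ of a cone $C$ with nonempty interior, pick an open ball $B(w,\rho) \subset C$; then every $z \in B(w,\rho)$ satisfies $a + tz \in G$ for all $t \geq 0$ (since $tz \in C$), but $a + tz \in G$ for all $t\ge 0$ is more than we have — we only know $a+C \subset G$, and $tz \in C$ because $C$ is a cone, so indeed $a + tz = a + (tz) \in a+C \subset G$; hence $z \in \rec G$ for all $z$ in the open ball $B(w,\rho)$, so $\rec G$ has nonempty interior, forcing $\spa(\rec G) = \R^n$ and, by (v), $\dim(\rec G) = n$.

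The only genuinely delicate point is the line-segment principle used in (i) and (iii): one must make sure it is applied to \emph{open} convex sets correctly, i.e.\ that $\interior(\overline G) = G$ for an open convex $G$ (true because $G$ is open and dense in $\overline G$, and $\overline G$ convex with nonempty interior has $\interior(\overline G) = \interior(G) = G$ by \cite[Theorem 6.3]{Roc}). Everything else is bookkeeping with the cited results of \cite{Roc}; I would keep the write-up to a few lines per item, citing \cite[\S 6, \S 8]{Roc} for the closed-set statements and invoking the two preliminary observations rather than reproving them each time.
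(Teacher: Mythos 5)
Your proposal is correct and takes essentially the same route as the paper, which simply invokes Rockafellar (Corollary~8.3.1 for (i), Theorems~8.2--8.4 for (ii)--(iv)) and leaves (v), (vi) as easy observations; your only deviation is proving (i) directly via the line-segment principle (with $\interior(\overline G)=G$) instead of quoting Corollary~8.3.1, which changes nothing substantive. One small slip in the converse of (vi): under the paper's definition a cone need not contain $0$, so ``$tz\in C$ for all $t\ge 0$'' (and hence $a\in G$) can fail at $t=0$; fix it by taking the base point $x:=a+z\in a+C\subset G$ in (iii), so that $x+\lambda z=a+(1+\lambda)z\in a+C\subset G$ for all $\lambda\ge 0$, and the argument goes through unchanged.
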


\begin{proof}
Property (i) follows from \cite[Corollary 8.3.1]{Roc} and (ii) from \cite[Theorem 8.2]{Roc}
 and (i). Theorem \cite[Theorem 8.3]{Roc} and (i) imply (iii). Property (iv) follows
 from \cite[Theorem 8.4]{Roc} and (i). Property (v) is obvious since  $0\in \rec(G)$.
 Using (iii) and (v), it is easy to prove (vi).
\end{proof}
The following lemma is an easy consequence of important \cite[Theorem 9.1]{Roc}.
\begin{lemma}\label{rchl}
Let $ G \subset \R^n$ be an unbounded open convex set and let $L: \R^n \to \R^m$
 be a linear surjection such that $L^{-1} (\{0\}) \cap \rec(G) = \{0\}$.
 Then   $\rec(L(G)) = L(\rec(G))$.
\end{lemma}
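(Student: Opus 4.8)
The plan is to prove the two inclusions $L(\rec(G)) \subset \rec(L(G))$ and $\rec(L(G)) \subset L(\rec(G))$ separately; the first is routine, while the second is where the hypothesis $L^{-1}(\{0\}) \cap \rec(G) = \{0\}$ is essential and where \cite[Theorem 9.1]{Roc} enters. First I would reduce to closed sets: by Lemma \ref{zvrc}(i) we have $\rec(G) = \rec(\overline G)$, and since $L$ is a continuous linear surjection one checks $\overline{L(G)} = \overline{L(\overline G)}$ (indeed $L(\overline G)$ is squeezed between $L(G)$ and $\overline{L(G)}$, using that $L$ is an open map, so all three have the same closure), hence $\rec(L(G)) = \rec(\overline{L(G)}) = \rec(\overline{L(\overline G)})$. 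Thus it suffices to prove $\rec(L(\overline G)) = L(\rec(\overline G))$ for the closed convex set $\overline G$, noting $\rec(\overline G) = \rec(G)$ so the hypothesis $L^{-1}(\{0\}) \cap \rec(\overline G) = \{0\}$ holds as well.

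For the inclusion $L(\rec(G)) \subset \rec(L(G))$: take $y \in \rec(G)$ and any point $w \in L(G)$, say $w = L(x)$ with $x \in G$. Then for every $\lambda \geq 0$ we have $x + \lambda y \in G$ by definition of $\rec(G)$, hence $w + \lambda L(y) = L(x + \lambda y) \in L(G)$, so $L(y) \in \rec(L(G))$. This direction uses nothing about $L^{-1}(\{0\})$.

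The reverse inclusion $\rec(L(G)) \subset L(\rec(G))$ is the main point. Here I would invoke \cite[Theorem 9.1]{Roc}, which (for a linear map $A$ and a closed convex set $C$) gives $A(C) $ closed and $\rec(A(C)) = A(\rec(C))$ under the hypothesis that the only $y$ in $\rec(C)$ with $Ay = 0$ is $y = 0$ — more precisely, Rockafellar's condition is that $\rec(C)$ contains no nonzero $y$ in the kernel of $A$ for which $-y$ is also in $\rec(C)$, but our stronger assumption $L^{-1}(\{0\}) \cap \rec(\overline G) = \{0\}$ certainly implies it. Applying this with $A = L$ and $C = \overline G$ yields $\rec(L(\overline G)) = L(\rec(\overline G))$ directly, and combining with the closed-set reduction of the first paragraph finishes the proof.

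The main obstacle is the bookkeeping in the closed-set reduction: one must make sure that passing from $G$ to $\overline G$ does not disturb the recession cone (handled by Lemma \ref{zvrc}(i)) and that $L(G)$, $L(\overline G)$, and $\overline G$ all interact correctly with closures, which relies on $L$ being an open mapping (as a surjective linear map between finite-dimensional spaces) so that $\overline{L(G)} = \overline{L(\overline G)}$. Once this is in place, the substantive content is entirely carried by \cite[Theorem 9.1]{Roc}, and no further estimates are needed.
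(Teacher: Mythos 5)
Your proposal is correct and follows essentially the same route as the paper: reduce to the closed set $\overline G$ via Lemma \ref{zvrc}(i), apply \cite[Theorem 9.1]{Roc} to $C:=\overline G$ to get that $L(\overline G)$ is closed with $\rec(L(\overline G)) = L(\rec(\overline G))$, and identify $\overline{L(G)}$ with $L(\overline G)$ to transfer the conclusion back to $L(G)$ (your separate proof of the easy inclusion and the appeal to openness of $L$ for the closure identity are harmless but not needed; continuity already gives $L(\overline G)\subset\overline{L(G)}$). One minor remark: your paraphrase of Rockafellar's hypothesis is not quite the actual condition, which requires every $y\in\rec(C)$ with $Ly=0$ to lie in the lineality space of $C$, but since the assumption $L^{-1}(\{0\})\cap\rec(G)=\{0\}$ implies that condition, the application is valid as written.
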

\begin{proof}
By Lemma \ref{zvrc} (i) we have $\rec(G) = \rec(\overline G)$. So $L^{-1} (\{0\}) \cap \rec(\overline G) = \{0\}$
 and thus \cite[Theorem 9.1]{Roc} (applied to $C:= \overline G$) implies that $L(\overline G)$
 is closed and $\rec(L(\overline G)) = L(\rec(\overline G))$. It is an easy well-known fact
 that $L(G)$ is an open convex set. Continuity of $L$ implies $L(\overline G) \subset \overline{L(G)}$ and
 so $L(\overline G) = \overline{L(G)}$ since $L(\overline G)$ is closed. Therefore
 $\rec(L(\overline G))= \rec(L(G))$ by Lemma \ref{zvrc} (i) and the equality  $\rec(L(G)) = L(\rec(G))$ follows.
\end{proof}

\section{Linear modulus}\label{lin}

The present short section is devoted to result \eqref{kl1} which is rewritten below as Theorem \ref{lint}. We do not know
 the origin of this interesting useful result which is well-known for a long time.

It is stated (for $G=X$) in \cite[p. 265]{LL}(1986), where it is written (without a reference) that the
 case $G=X=\R^n$ is well-known and  it is shown how the general case  easily  follows from the finite-dimensional case.
 The case $G=X$ is completely proved in \cite{HP}(1989).
Let us note that a weaker version of Theorem \ref{lint} which asserts that $f \in C^{1,1}(G)$
 only, is factually proved in \cite{Rol1}(1979) (for $G=X$ and Lipschitz $f$) and {\it very easily} follows
 (for $G=X$) from \cite[Proposition 10.b]{Mo}(1965) which asserts that if $g$ and $h$ are continuous convex functions such that $g(x)+h(x)= (1/2)\|x\|^2,\ x \in X$, then $g'$ is Lipschitz with constant $1$.  The equality $G=X$ is vital in \cite{Mo} and \cite{HP}.

Several proofs of Theorem \ref{lint} work with second derivatives of $f$. For example,  \cite[Corollary 3.8]{CaSi}
 and  \cite[Exercise 3.8, p. 183]{BC} use distributional derivatives, and \cite[Lemma 2.7]{BPP} and  \cite[Theorem 2.3]{BBEM}
 use Aleksandrov's theorem on a.e. second differentiability of convex functions.

We present below a short elementary proof using the procedure (based on \eqref{buno})
 which will be used also in the proof of Lemma \ref{le_l}. Our proof is similar
 to elementary proofs given in \cite[Theorem 1.1]{Iv} and \cite[Corollary 5.1]{BAJN} and we guess
 that  a similar proof is (essentially) contained also in a much older article.

\begin{theorem}[Well-known theorem]\label{lint}
Let $ X $ be a Hilbert space, $ G\subset X $ an open convex set
 and let a function $f$ on $G$ be both semiconvex and semiconcave with constant $C$.
Then $f$ is differentiable on $G$ and $ f' $ is Lipschitz with constant $C$.
\end{theorem}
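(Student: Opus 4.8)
The plan is to reduce everything to the one-dimensional case and there exploit Lemma \ref{apr} together with a convexity/monotonicity argument. First, by Corollary \ref{aprhil} (which is Lemma \ref{apr} specialized to a Hilbert space via Remark \ref{H}), $f$ is already Fr\'echet differentiable on $G$ and satisfies the second-order estimate \eqref{ssaprhil}, i.e.
\begin{equation*}
|f(y)-f(x)-f'(x)(y-x)| \leq \frac{C}{2}\|y-x\|^2, \qquad x,y \in G.
\end{equation*}
Writing this inequality also with the roles of $x$ and $y$ interchanged and adding the two instances, the triangle inequality yields
\begin{equation*}
(f'(y)-f'(x))(y-x) \leq C\|y-x\|^2, \qquad x,y \in G.
\end{equation*}
So the derivative is ``Lipschitz along segments'' in the sense of this one-sided monotone-type estimate; the task is to upgrade this to the genuine Lipschitz bound $\|f'(y)-f'(x)\|_{X^*} \leq C\|y-x\|$.

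Next I would restrict attention to a single pair $x,y \in G$ and, since $G$ is open and convex, work on a small two-dimensional (or, if $f'(y)=f'(x)$, trivial) slice containing the segment $[x,y]$; equivalently, one parametrizes a line in the direction in which $f'(y)-f'(x)$ is ``large''. Concretely, fix $x$, let $h:=y-x$, and for a unit vector $v$ to be chosen consider the function $\varphi(t):=f(x+th)+t\,\text{(correction)}$; the cleanest route is the standard trick: the function $g(z):=\frac{C}{2}\|z\|^2 - f(z)$ and $\tilde g(z):=\frac{C}{2}\|z\|^2+f(z)$ are both convex on $G$ (this is exactly semiconcavity and semiconvexity with constant $C$, by Definition \ref{klsem}), and $g'(z)=Cz-f'(z)$, $\tilde g'(z)=Cz+f'(z)$ are therefore monotone operators on $G$. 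Using monotonicity of $g'$ between $x$ and $y$ gives $(f'(y)-f'(x))(y-x)\le C\|y-x\|^2$ again, but now I also have the reverse-type inequality from monotonicity of $\tilde g'$, namely $-(f'(y)-f'(x))(y-x) \le C\|y-x\|^2$; these together still only control the pairing with $y-x$.

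To extract the full norm estimate I would use the procedure the authors announce (``based on \eqref{buno}''): pick a third point. Given $x,y$, set $w := f'(y)-f'(x) \in X^*$ and choose (by definition of the dual norm, up to $\varepsilon$) a unit vector $u\in X$ with $w(u)$ close to $\|w\|$. For small $s>0$ the point $z:=x+s u$ lies in $G$; apply monotonicity of $g'$ (equivalently \eqref{ssaprhil} in its summed form) to the pairs $(x,z)$ and $(y,z)$, and combine with the pairing estimate for $(x,y)$. Optimizing over $s$ — this is the one short computation I would actually carry out — balances a term linear in $s$ coming from $w(u)$ against a term of order $s^2/s = s$ coming from the quadratic remainders plus a $\|y-x\|^2/s$ term, and choosing $s$ comparable to $\|y-x\|$ produces $\|w\| \le C\|y-x\| + O(\varepsilon)$; letting $\varepsilon\to 0$ finishes it. Alternatively, and perhaps more in the spirit of Lemma \ref{pp}, one reduces to $X=\R$ first: along any line the estimate $(f_{a,v})'(t_2)-(f_{a,v})'(t_1) \le C(t_2-t_1)$ for $t_2>t_1$ together with the same for $-f_{a,v}$ gives $|(f_{a,v})'(t_2)-(f_{a,v})'(t_1)|\le C|t_2-t_1|$, i.e. each $f_{a,v}$ is $C$-Lipschitz-differentiable; then $\|f'(y)-f'(x)\|=\sup_{\|v\|=1}|(f'(y)-f'(x))(v)|$, and for each fixed $v$ applying the one-dimensional bound along a suitable line through $x$ and $y$ (using convexity of $G$ to stay inside) gives the estimate, by the mean-value-type control of $(f'(\cdot))(v)$ restricted to that line.

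\textbf{Main obstacle.} The genuinely nontrivial point is passing from the ``scalar'' control $(f'(y)-f'(x))(y-x)\le C\|y-x\|^2$ — which is immediate — to control of $\|f'(y)-f'(x)\|_{X^*}$, since the difference vector $f'(y)-f'(x)$ need not point in the direction $y-x$. This is exactly where the third-point/optimization device (or, equivalently, the reduction to lines in arbitrary directions combined with the Hilbert structure) is needed, and it is the only place where boundedness-type issues could a priori intrude; here they do not, because openness of $G$ suffices to fit the auxiliary short segment $x+su$ inside $G$ for small $s$, and $s\asymp\|y-x\|$ can always be arranged by first shrinking to the case $x,y$ close together (the Lipschitz estimate being local). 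No boundedness or recession-cone hypothesis on $G$ is required for the linear modulus — this is precisely the contrast with Theorem \ref{hlav}.
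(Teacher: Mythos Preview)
Your sketch has the right starting point (Corollary \ref{aprhil} and the resulting quadratic estimate) and correctly identifies the obstacle, but neither of the two routes you outline actually reaches the sharp constant $C$.

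\textbf{The third-point approach.} With the asymmetric choice $z=x+su$ and the monotone/summed estimates, the computation you describe gives, in a Hilbert space,
\[
\langle f'(y)-f'(x),u\rangle \;\le\; C\Big(s-\langle u,h\rangle+\frac{\|h\|^2}{s}\Big),\qquad h:=y-x,
\]
and optimizing $s=\|h\|$ yields at best $2C\|h\|$ (and $3C\|h\|$ when $\langle u,h\rangle<0$), not $C\|h\|$. The claim ``choosing $s$ comparable to $\|y-x\|$ produces $\|w\|\le C\|y-x\|$'' is therefore too optimistic. The paper's argument differs in three essential ways: the auxiliary point is placed \emph{symmetrically} at $\tfrac12(x+y+w)$ with $\|w\|=\|y-x\|$; the symmetric remainder $f(x)-f(y)+\tfrac12 f'(x)(y-x)-\tfrac12 f'(y)(x-y)$ is assumed $\le 0$ without loss of generality (replace $f$ by $-f$ otherwise), so it can be dropped; and the two surviving quadratic terms combine via the \emph{parallelogram law} to exactly $\tfrac{C}{2}\|y-x\|^2$. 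All three are missing from your sketch, and without them the sharp constant is not obtained. Your remark that ``the Lipschitz estimate is local'' does not help: telescoping along a segment preserves whatever constant you already have, it cannot improve it.

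\textbf{The reduction-to-lines approach.} This is flawed as stated. The one-dimensional result gives $|(f'(a+t_2 v)-f'(a+t_1 v))(v)|\le C|t_2-t_1|$, which controls $(f'(\cdot))(v)$ only along lines in direction $v$. To bound $\|f'(y)-f'(x)\|=\sup_{\|v\|=1}(f'(y)-f'(x))(v)$ you need this pairing for $v$ \emph{not} parallel to $y-x$, and the line restrictions say nothing about that case. There is no ``suitable line through $x$ and $y$'' in direction $v$ unless $v$ is parallel to $y-x$.
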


\begin{proof}
By Corollary \ref{aprhil} we know that $f'$ exists on $G$ and \eqref{ssaprhil} holds.
First we will show that if
 $ u,v\in G $, $ u\neq v $, then
\begin{equation}\label{koule}
\overline B \bigg(\frac{u+v}{2}, \frac{\|v-u\|}{2}\bigg) \subset G  \ \Longrightarrow\ \|f'(v)-f'(u)\|
 \leq C \|v-u\|.
\end{equation}
To this end, suppose that the assumption of \eqref{koule} holds. Without any loss of generality
 we can suppose that
\begin{equation}\label{buno}
f(u)-f(v)+\frac{1}{2}f'(u)(v-u)-\frac{1}{2}f'(v)(u-v)\leq 0,
\end{equation}
 since otherwise we could work with $\tilde f:= -f$ instead of $f$.
Now consider an arbitrary $ w\in X $, $ \Vert w\Vert=\Vert v-u\Vert $. Then we have
 $\frac{1}{2}(u+v+w) \in G$ and therefore, using  \eqref{ssaprhil} and \eqref{buno}, we obtain
\begin{align*}
f'(v)&\bigg(\frac{1}{2}w\bigg)-f'(u)\bigg(\frac{1}{2}w\bigg)\\
&=f\bigg(\frac{1}{2}(u+v+w)\bigg)-f(u)-f'(u)\bigg(\frac{1}{2}(w+v-u)\bigg)\\
&+f(u)-f(v)+\frac{1}{2}f'(u)(v-u)-\frac{1}{2}f'(v)(u-v)\\
&-\bigg(f\bigg(\frac{1}{2}(u+v+w)\bigg)-f(v)-f'(v)\bigg(\frac{1}{2}(w-v+u)\bigg)\bigg)\\
&\leq\frac{C}{8}\Vert w+(v-u)\Vert^2+\frac{C}{8}\Vert w-(v-u)\Vert^2\\
&=\frac{C}{4}\Vert w\Vert^2+\frac{C}{4}\Vert v-u\Vert^2=\frac{C}{2}\Vert v-u\Vert^2
\end{align*}
and hence
\begin{align*}
\Vert f'(v)-f'(u)\Vert=\sup_{w\in X,\Vert w\Vert=\Vert u-v\Vert}\frac{f'(v)(w)-f'(u)(w)}{\Vert w\Vert}\leq C\Vert v-u\Vert.
\end{align*}
Now consider arbitrary $ x,y\in G $, $ x\neq y $. We can clearly choose $n\in \N$ so large
 that, for each $1\leq i \leq n$,
$$  \overline B \bigg(\frac{z_{i-1}+z_i}{2}, \frac{\|z_i-z_{i-1}\|}{2}\bigg)\subset G,\ \text{where}
 \ z_i:=x+\frac{i}{n}(y-x), \quad i\in\lbrace 0,\dots,n\rbrace.$$
Then \eqref{koule} implies that $\|f'(z_i) - f'(z_{i-1})\| \leq C \|z_i - z_{i-1}\|$
 for each $1\leq i \leq n$, and consequently
\begin{align*}
\Vert f'(y)-f'(x)\Vert&\leq\sum_{i=1}^{n}\Vert f'(z_i)-f'(z_{i-1})\Vert\leq 
  \sum_{i=1}^{n} C \|z_i - z_{i-1}\| =
C\Vert y-x\Vert.
\end{align*}
\end{proof}

\section{Results for a general modulus}\label{gen}
In this section, we prove Theorem \ref{Q} and interesting Corollary \ref{cepr}. Other
 more special versions of implication \eqref{impl} (for $X=\R$ or for special $\omega$) can be found in Remark \ref{specimpl}
 and Remark \ref{oliminf}. 

\begin{remark}\label{jenapr}
Note that in the proof of Lemma \ref{le_l} (and so also in the proofs of 
Corollary \ref{cepr} and Theorem \ref{Q})
 the assumption 
 that $f$ is both $\omega$-semiconvex and $\omega$-semiconcave is applied 
 via property \eqref{ssapr} only.
\end{remark}

Proceeding similarly as in the proof of Theorem \ref{lint}, we obtain the following estimate.
\begin{lemma}\label{le_l}
Let $ X $ be a normed linear space, $ G\subset X $ an open convex set,  $ \omega\in\M $, and let
$f$ be a function on $G$ which  is both $ \omega $-semiconvex and $ \omega $-semiconcave.
If $ y,z\in G $, $ r>0 $ and $ \overline{B}(y,r)\subset G $, then 
\begin{equation}\label{zodh}
\Vert f'(z)-f'(y)\Vert
\leq 2\bigg(1+\frac{\Vert z-y\Vert}{r}\bigg)\omega\bigg(\frac{1}{2}r+\frac{1}{2}\Vert z-y\Vert\bigg).
\end{equation}
\end{lemma}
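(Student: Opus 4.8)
The plan is to imitate closely the proof of Theorem \ref{lint}, replacing the Hilbert-space estimate \eqref{ssaprhil} by the general estimate \eqref{ssapr} from Lemma \ref{apr}, which by Remark \ref{jenapr} is the only consequence of $\omega$-semiconvexity/semiconcavity we shall use. Fix $y,z \in G$, $r>0$ with $\overline B(y,r) \subset G$; set $d := \|z-y\|$ and abbreviate $\rho := \tfrac12 r + \tfrac12 d$. By Lemma \ref{apr}, $f$ is Fr\'echet differentiable and \eqref{ssapr} holds. As in the proof of Theorem \ref{lint}, after possibly replacing $f$ by $-f$ (which preserves all hypotheses, and for which the left-hand side $\|f'(z)-f'(y)\|$ is unchanged) we may assume the analogue of \eqref{buno}, namely
\begin{equation}\label{bunogen}
f(y)-f(z)+\tfrac12 f'(y)(z-y)-\tfrac12 f'(z)(y-z)\leq 0.
\end{equation}
Now take an arbitrary $w \in X$ with $\|w\| = r$. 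The point $m := y + \tfrac12 w + \tfrac12(z-y) = \tfrac12(y+z+w)$ lies in $G$: indeed $m - z = \tfrac12(w - (z-y))$ has norm at most $\tfrac12 r + \tfrac12 d = \rho$, but this only gives $m$ near $z$, not obviously in $G$; the safe estimate is $\|m - y\| = \|\tfrac12 w + \tfrac12(z-y)\| \le \tfrac12 r + \tfrac12 d = \rho$, and we need $m \in G$. Since $\overline B(y,r)\subset G$ but $\rho$ may exceed $r$, I will instead write $m = \tfrac12\big((y+w) + z\big)$ as the midpoint of $y+w \in \overline B(y,r) \subset G$ and $z \in G$, so $m \in G$ by convexity. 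Good — that is the right way to see $m\in G$.

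Next I carry out the telescoping identity exactly as in Theorem \ref{lint}: write $f'(z)(\tfrac12 w) - f'(y)(\tfrac12 w)$ as the sum of
\[
\Big(f(m) - f(y) - f'(y)\big(\tfrac12(w+z-y)\big)\Big) + \Big(f(y)-f(z)+\tfrac12 f'(y)(z-y)-\tfrac12 f'(z)(y-z)\Big) - \Big(f(m) - f(z) - f'(z)\big(\tfrac12(w-z+y)\big)\Big).
\]
The middle bracket is $\le 0$ by \eqref{bunogen}. To the first and third brackets I apply \eqref{ssapr}, with increments $h_1 := \tfrac12(w + z - y) = m - y$ and $h_2 := \tfrac12(w - z + y) = m - z$ respectively; their norms satisfy $\|h_1\| \le \tfrac12 r + \tfrac12 d = \rho$ and $\|h_2\| \le \tfrac12 r + \tfrac12 d = \rho$. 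Since $\omega$ is non-decreasing and $t\mapsto t\,\omega(t)$ is therefore non-decreasing, \eqref{ssapr} yields $|f(m)-f(y)-f'(y)(h_1)| \le \|h_1\|\,\omega(\|h_1\|) \le \rho\,\omega(\rho)$ and likewise for the third bracket. Hence
\[
f'(z)\big(\tfrac12 w\big) - f'(y)\big(\tfrac12 w\big) \le 2\rho\,\omega(\rho).
\]
Dividing by $\tfrac12\|w\| = \tfrac12 r$ and taking the supremum over all $w$ with $\|w\|=r$ (using that $-w$ ranges over the same set, so the supremum bounds $\|f'(z)-f'(y)\|$) gives
\[
\|f'(z)-f'(y)\| \le \frac{2\rho\,\omega(\rho)}{\tfrac12 r} = \frac{4\rho}{r}\,\omega(\rho) = \Big(2 + \frac{2d}{r}\Big)\omega\Big(\tfrac12 r + \tfrac12 d\Big) = 2\Big(1 + \frac{\|z-y\|}{r}\Big)\omega\Big(\tfrac12 r + \tfrac12\|z-y\|\Big),
\]
which is \eqref{zodh}.

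The only genuinely delicate points are bookkeeping ones: first, checking that $m = \tfrac12((y+w)+z)$ lies in $G$ — resolved by viewing it as a midpoint of two points already known to be in $G$, rather than trying to fit it inside $\overline B(y,r)$; and second, tracking the norm bounds $\|h_1\|,\|h_2\| \le \tfrac12 r + \tfrac12 d$ carefully so that the argument $\rho$ of $\omega$ comes out as claimed and the constant is exactly $2$. Everything else is the verbatim telescoping identity and the one-sided reduction already used for Theorem \ref{lint}.
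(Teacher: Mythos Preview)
Your proof is correct and follows essentially the same route as the paper's: the same reduction via \eqref{bunogen}, the same telescoping identity with the auxiliary point $m=\tfrac12(y+z+w)$, the same application of \eqref{ssapr} to the two increments, and the same division by $\tfrac12 r$. The only cosmetic differences are that you make explicit the midpoint argument for $m\in G$ and introduce the abbreviation $\rho$, while the paper records the intermediate bound $\tfrac12\|w\pm(z-y)\|\,\omega(\tfrac12\|w\pm(z-y)\|)$ before passing to $(r+\|z-y\|)\,\omega(\rho)$.
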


\begin{proof}
As in the proof of Theorem \ref{lint}, we may suppose that
\begin{align*}
f(y)-f(z)+\frac{1}{2}f'(y)(z-y)-\frac{1}{2}f'(z)(y-z)\leq 0.
\end{align*}
Then for every $ w\in X $, $ \Vert w\Vert=r $, we obtain, using $ \frac{1}{2}(z+y+w)\in G $ and \eqref{ssapr},
\begin{align*}
f'&(z)\bigg(\frac{1}{2}w\bigg)-f'(y)\bigg(\frac{1}{2}w\bigg)\\
&=f\bigg(\frac{1}{2}(z+y+w)\bigg)-f(y)-f'(y)\bigg(\frac{1}{2}(w+z-y)\bigg)\\
&+f(y)-f(z)+\frac{1}{2}f'(y)(z-y)-\frac{1}{2}f'(z)(y-z)\\
&-\bigg(f\bigg(\frac{1}{2}(z+y+w)\bigg)-f(z)-f'(z)\bigg(\frac{1}{2}(w-z+y)\bigg)\bigg)\\
&\leq\frac{1}{2}\Vert w+(z-y)\Vert\omega\bigg(\frac{1}{2}\Vert w+(z-y)\Vert\bigg)+\frac{1}{2}\Vert w-(z-y)\Vert\omega\bigg(\frac{1}{2}\Vert w-(z-y)\Vert\bigg)\\ 
&
\leq (r+\|z-y\|)\Vert\omega\bigg(\frac{1}{2}(r+\|z-y\|)\bigg)
\end{align*}
and hence
\begin{align*}
\Vert f'(z)-f'(y)\Vert&=\sup_{w\in X,\Vert w\Vert=r}\frac{f'(z)(w)-f'(y)(w)}{\Vert w\Vert}\\
&\leq 2\bigg(1+\frac{\Vert z-y\Vert}{r}\bigg)\omega\bigg(\frac{1}{2}r+\frac{1}{2}\Vert z-y\Vert\bigg).
\end{align*}
\end{proof}
Using  \eqref{zodh} for  $z\neq y$ and $r:= \|z-y\|$, we immediately obtain the following result.
\begin{corollary}\label{cepr} 
Let $ X $ be a normed linear space and $ \omega\in\M $.
 Suppose that  a function $ f $ on $X$ is both $ \omega $-semiconvex and $ \omega $-semiconcave. 
Then $ f' $ is uniformly continuous with modulus $ 4\omega $.
\end{corollary}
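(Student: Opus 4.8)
The plan is to reduce the statement to Lemma \ref{le_l} directly, since this corollary is exactly the special case of that lemma in which $X$ itself (rather than a proper convex subset) is the domain, and in which we are free to choose the auxiliary ball radius $r$. First I would recall that, since the function $f$ is defined on all of $X$, the closed ball $\overline B(y,r)$ is contained in $G=X$ for every $y\in X$ and every $r>0$; thus the geometric hypothesis of Lemma \ref{le_l} is automatically satisfied no matter how we pick $y$, $z$ and $r$. This is the only place where the equality $G=X$ is used, and it removes the need for the chaining argument (the partition $z_i=x+\tfrac in(y-x)$) that was needed in Theorem \ref{lint} to pass from balls contained in $G$ to arbitrary pairs of points.

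Next I would fix arbitrary $y,z\in X$ with $y\neq z$ and apply \eqref{zodh} with the specific choice $r:=\|z-y\|$. Substituting this value of $r$ into the right-hand side of \eqref{zodh}, the factor $1+\|z-y\|/r$ becomes $1+1=2$, and the argument of $\omega$ becomes $\tfrac12\|z-y\|+\tfrac12\|z-y\|=\|z-y\|$. Hence \eqref{zodh} collapses to
\begin{equation*}
\Vert f'(z)-f'(y)\Vert \leq 4\,\omega(\Vert z-y\Vert),
\end{equation*}
which is precisely the assertion that $f'$ is uniformly continuous with modulus $4\omega$ (the case $y=z$ being trivial). No further estimate is needed.

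I do not anticipate any real obstacle here: the entire content of the corollary is already contained in Lemma \ref{le_l}, and the only ``trick'' is the observation that when the domain is the whole space one may take $r$ as large as one likes, in particular equal to $\|z-y\|$, which is the value that makes the two summands inside $\omega$ equal and the prefactor equal to $4$. If anything deserves a remark, it is merely that one should check the choice $r=\|z-y\|>0$ is legitimate (it is, since $y\neq z$) and that this choice is in a sense optimal for the bound obtained from \eqref{zodh}, but proving optimality is not required for the statement. The whole argument is a two-line substitution once Lemma \ref{le_l} is in hand.
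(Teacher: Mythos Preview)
Your proposal is correct and matches the paper's own proof exactly: the paper simply says to apply \eqref{zodh} with $z\neq y$ and $r:=\|z-y\|$, which is precisely the substitution you carry out.
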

 \begin{remark}\label{opt4}
Corollary \ref{cepr} is optimal in the sense that the constant $4$ cannot be diminished,
 even for linear modulus. It is factually shown in \cite[Example 5]{BAJN}, where the authors
 consider the example of the function $f(x,y)= x^2-y^2$ on the space $X= (\R^2,\|\cdot\|_{\infty})$.
  They (factually) show that then \eqref{ssapr} holds with $G=X$ and $\omega(t)=t$, which implies
	by  Lemma \ref{aprc} that $f$ is both $\omega$-semiconvex and $\omega$-semiconcave on $X$. Observing
	 that $\|f'((1,1))- f'((0,0))\| \geq 4 \|(1,1)\|_{\infty} = 4$ (see \cite[Example 5]{BAJN}), our assertion follows.
	\end{remark}

\medskip

Using Lemma \ref{le_l}, we give also a relatively short proof
 of Theorem \ref{Q} formulated in  the  introduction.

\begin{proofof}

(i) Suppose that $G$ is bounded.

Let $ x,x+h\in G $ and $ \varepsilon>0 $. We find $ y_0\in G $ and $ r_0>0 $ such that $ \overline{B}(y_0,r_0)\subset G $ and $ \diam G/r_0\leq e_G+\varepsilon $.
Set $ \lambda:=\Vert h \Vert/\diam G $, $ s:=x+\frac{1}{2}h $, $ y:=\lambda y_0+(1-\lambda)s $ and $ r:=\lambda r_0 $. Then 
$$ \overline{B}(y,r)=\lambda\overline{B}(y_0,r_0)+(1-\lambda)\lbrace s\rbrace\subset G\ \  \ \text{and}\ \ \   \frac{\|h\|}{e_G + \ep} \leq r \leq  \frac{\|h\|}{e_G}.  $$
Clearly $\|s-y_0\| \leq \diam G - r_0$ and so 
 $$\|s-y\| = \|\lambda(s-y_0)\| = \frac{\|h\|}{\diam G} \|s-y_0\|\leq 
\|h\| \left( 1- \frac{r_0}{\diam G}\right)         \leq \|h\| - \frac{1}{e_G + \ep} \|h\|.$$
 Consequently, if
 $z=x$ or $z= x+h$, we have  $\|z-y\| \leq \frac{3}{2}\|h\| - \frac{1}{e_G + \ep} \|h\|$.
  Using this inequality, estimates of $r$ and the obvious inequality $e_G\geq 2$, we obtain
	$$  2\left(1+ \frac{\Vert z-y\Vert}{r}\right) \leq 2\left(1 + \frac{3}{2}(e_G+\ep) -1 \right) = 3 (e_G+\ep),\ \text{and}$$
$$ \frac{1}{2}r+\frac{1}{2}\Vert z-y\Vert  \leq
\frac{1}{2}\left(\frac{\|h\|}{e_G} + \frac{3}{2}\, \|h\| - \frac{1}{e_G + \ep}\,  \|h\|\right)
 < \|h\|.$$
 Therefore \eqref{zodh} gives  
$\Vert f'(z)-f'(y)\Vert \leq 3 (e_G+\ep) \omega(\|h\|)$ 
 and  
$$\Vert f'(x+h)-f'(x)\Vert \leq \Vert f'(x+h)-f'(y)\Vert +
\Vert f'(x)-f'(y)\Vert \leq 6 (e_G+\ep) \omega(\|h\|)$$
 which implies the assertion of (i).
\medskip

(ii) Suppose that
 $ a\in X $, $ r>0 $ and $ \lbrace u_n\rbrace_{n\in\N} $ are such that $ \Vert u_n\Vert=n $ and $ B(a+u_n,nr)\subset G $ for each $ n\in\N $. 

Let $ x,x+h\in G $. Then there exists $ n\in\N $ such that $ x,x+h\in\overline{B}(a,n) $. Set $ B:=G\cap B(a,(1+r)n) $. Then $ x,x+h,a+u_n\in B $. Since $ B(a+u_n,rn)\subset B $, we have
\begin{align*}
e_B\leq\frac{\diam B}{rn}\leq\frac{2(1+r)n}{rn}=2\left(1+\frac{1}{r}\right).
\end{align*}
Thus $ \Vert f'(x+h)-f'(x)\Vert\leq 12(1+1/r)\omega(\Vert h\Vert) $ by the case (i).\hfill $ \square $
\end{proofof}

\begin{remark}\label{specimpl}
Let $X$ be a normed linear space, $G \subset X$ an open convex set, $\omega \in \M$, and 
 let $f$ be both $\omega$-semiconvex and $\omega$-semiconcave on $G$.
\begin{enumerate}
\item[(i)]
If $X$ is a Hilbert space and $\omega$ is linear, then 
$f \in C_*^{1,2\omega}(G)$.
\item[(ii)]
If $X=\R$, then $f \in C_*^{1,2\omega}(G)$ too.
\item[(iii)] If $X$ is general and $\omega$ is linear, then $f \in C_*^{1,4\omega}(G)$. 
\end{enumerate}
 Indeed, (i) is a reformulation of \eqref{kl1} by Remark \ref{H}. Easy assertion (ii)
 follows e.g. from  \cite[Proposition 2.8 (i)]{DZ2}.
 To prove (iii), we can proceed   similarly  as in the second part of the proof of Theorem \ref{lint},
 choosing for given $x$, $y$ the number $n$ so large that the points $z_i$
 have the property that $\overline B(z_i, \|z_i-z_{i-1}\|) \subset G$, $i=1,\dots,n$. Then, applying
 \eqref{zodh} to $y:= z_i$, $z:= z_{i-1}$ and $r:= \|z_i-z_{i-1}\|$, the inequality
 $\|f'(y)- f'(x)\|\leq 4 \omega(\|y-x\|)$ easily follows.
\end{remark}

\section{Optimality in finite-dimansional spaces}\label{opt}

By  Remark \ref{specimpl} (iii), we know that if $\omega$ is linear, then the implication
 \eqref{impl} holds for each open convex subset $G$ of an arbitrary normed linear space. This assertion holds also for some nonlinear
 $\omega \in \M$ (cf. Remark \ref{oliminf} below), but the following example shows that for the
 most interesting nonlinear moduli this assertion does not hold.

\begin{example}\label{expas}
Let  $X=\R^2 $ and $ G:=\R\times (0,1)$. Let $\omega \in \M$ be such
 that
\begin{equation}\label{liminf}
 \liminf_{t\to 0+} \frac{\omega(t)}{t} > 0 \ \ \text{and}\ \ 
 \liminf_{t\to \infty} \frac{\omega(t)}{t} = 0.
\end{equation}
 Then the implication \eqref{impl} does not hold.
 \end{example}
 
\begin{proof}
 Set 
$$g(x):= x_1 x_2,\ \ x=(x_1,x_2) \in G.$$
By  \eqref{liminf} we can choose $1>\delta>0$ and $c>0$ such that
 $ ct \leq \omega(t)$  for each $t \in (0, \delta]$. Then the monotonicity of $\omega$ implies that $c \delta \leq  \omega(t)$
 for each $t \in (\delta, \infty)$.
We will show that
\begin{equation}\label{gss}
\text{
 $g$ is both $\left(\frac{2}{c \delta} \omega\right)$-semiconvex
 and $\left(\frac{2}{c \delta} \omega\right)$-semiconcave on $G$.}
\end{equation}
 To this
  end consider $x= (x_1,x_2) \in G$ and $h = (h_1,h_2)\in \R^2$  with
	 $x+h \in G$. Note that  $|h_2|<1$. 
	We have
	$$ |g(x+h) - g(x) - g'(x) (h)|= |(x_1+h_1)(x_2+h_2) - x_1 x_2 
	 - (x_2 h_1+x_1h_2) | = |h_1 h_2|.$$
Now we distinguish two cases. If $\|h\| \leq \delta$, then
$$|g(x+h) - g(x) - g'(x) (h)|  = |h_1 h_2|\leq \|h\|\cdot \|h\|
\leq  \|h\|\, \frac{1}{c} \omega(\|h\|).$$
If  $\|h\| > \delta$, we have
$$|g(x+h) - g(x) - g'(x) (h)|  = |h_1 h_2|\leq |h_1| \leq \|h\|\, 
 \frac{1}{c\delta} \omega(\|h\|).$$
 So Lemma \ref{aprc} implies \eqref{gss}, and consequently the function
 $f:= \frac{c\delta}{2} g$
is both $ \omega$-semiconvex
 and $\omega$-semiconcave on $G$.  

However, we will show that $f \notin C^{1,\omega}(G)$. Suppose to the contrary that $f'$ is uniformly continuous with modulus $D \omega$ on $G$ for some $D>0$.
 Then also  $h(x_1,x_2):= \frac{\partial g}{\partial x_2}(x_1,x_2) = x_1$ is uniformly continuous with modulus $E\omega$ on $G$ for some $E>0$. By \eqref{liminf} there exists $t_0>0$ such that
$ E \omega(t_0)  < t_0$ and therefore
$$ t_0 = h\bigg(t_0,\frac{1}{2}\bigg) = h\bigg(t_0,\frac{1}{2}\bigg) -
 h\bigg(0,\frac{1}{2}\bigg)\leq  E \omega(t_0) < t_0,$$
 which is a contradiction. 
\end{proof}

\begin{remark}\label{onray}
We have proved that if $\omega \in \M$ has properties \eqref{liminf}, then there exists
 a $C^1$ function $f$ on the strip $\R \times (0,1)$ which is both $\omega$-semiconvex and 
 $\omega$-semiconcave, and for which $f'$ is  uniformly  continuous on the ray                
 $[0,\infty) \times \{\frac{1}{2}\}$ with modulus $D \omega$ for no $D>0$.
\end{remark}

\begin{remark}\label{oliminf}
 \begin{enumerate}
\item
If $\liminf_{t\to 0+} \frac{\omega(t)}{t} = 0$, then
 (for arbitrary $X$ and $G$) $f$ is $\omega$-semiconvex (resp.
 $\omega$-semiconcave) on $G$ if and only if $f$ is continuous
 and convex (resp. concave) on $G$ (see e.g.  \cite[p. 3, (2)]{Kr}). Consequently
 \eqref{impl}  clearly holds for arbitrary $X$ and $G$.
\item
 If $\liminf_{t\to \infty} \frac{\omega(t)}{t} > 0$, it is not difficult to 
  prove that \eqref{impl}  holds  for $X=\R^2$ and an arbitrary $G \subset X$.
	\end{enumerate}
It follows that condition \eqref{liminf} in Example \ref{expas} cannot be relaxed.
\end{remark}

The proof of Theorem \ref{hlav}  for $n=2$ is based on the constructions of the following two lemmas. 
 (The case of general $n$ needs a nontrivial inductive argument.)

\begin{lemma}\label{E}
Let $\eta: [0,\infty) \to [0,\infty)$ be a  continuous nondecreasing nonconstant concave function with $\eta(0)=0$ and $\lim_{t\to \infty} \eta(t)/t = 0$.
Then there exists a concave modulus $\omega$ such that
\begin{enumerate}
\item[(i)] $\omega(t)\geq \min(1,t),\ \ t \in [0,\infty),$
\item[(ii)] the function 
 $g(t):= \frac{\eta(t)\cdot \log t}{t\cdot  \omega(t)},\ t \in [1,\infty),$ is bounded, 
\item[(iii)] $\lim_{t\to \infty} \omega(t)/\log t = 0$, 
\item[(iv)] $\lim_{t\to \infty} \omega(t) = \infty$.
\end{enumerate}
\end{lemma}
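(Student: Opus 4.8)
The plan is to take $\omega$ to be the least concave majorant of an explicit auxiliary function $h$, chosen so that $\omega\ge h$ immediately yields (i), (ii) and (iv), while the estimate $h(t)=o(\log t)$ yields (iii). First I would introduce the continuous, nonnegative function $h$ on $[0,\infty)$ defined by
\[
h(t):=\begin{cases}t,&0\le t\le 1,\\ \max\!\bigl(1,\ \tfrac{\eta(t)\log t}{t},\ \sqrt{\log t}\,\bigr),&t>1,\end{cases}
\]
and observe that $\beta:=\sup_{s>0}h(s)/s$ is finite ($h(s)/s$ equals $1$ on $(0,1]$, and on $[1,\infty)$ it is continuous and tends to $0$ at infinity, using $\eta(t)/t\to 0$). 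Then I would let $\omega$ be the least concave majorant of $h$, that is, the pointwise infimum of all affine functions dominating $h$ on $[0,\infty)$; being an infimum of affine functions, $\omega$ is concave, it satisfies $\omega\ge h$, and since the affine function $t\mapsto\beta t$ dominates $h$ we also have $\omega(t)\le\beta t$, so $\omega$ is finite-valued.

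Most of the required properties are then immediate. From $\omega\ge h$: on $[0,1]$, $\omega(t)\ge t=\min(1,t)$; on $[1,\infty)$, $\omega(t)\ge 1=\min(1,t)$ and $\omega(t)\ge\frac{\eta(t)\log t}{t}$, whence $g(t)\le 1$ there — this gives (i) and (ii) — while $\omega(t)\ge\sqrt{\log t}\to\infty$ gives (iv). From $0\le h(t)\le\omega(t)\le\beta t$ we get $\omega(0)=0$ and $\lim_{t\to0+}\omega(t)=0$; and since a concave function on $[0,\infty)$ that is bounded below is automatically nondecreasing, $\omega$ is nondecreasing. Hence $\omega$ is a concave modulus.

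The only genuinely substantive step — and the place where the hypothesis $\lim_{t\to\infty}\eta(t)/t=0$ is really used — is (iii). The crucial observation is that $h(t)=o(\log t)$ as $t\to\infty$: the summands $1$ and $\sqrt{\log t}$ are trivially $o(\log t)$, and $\frac{\eta(t)\log t}{t}=\frac{\eta(t)}{t}\cdot\log t=o(\log t)$ precisely because $\eta(t)/t\to 0$. Given $\varepsilon>0$, I would choose $T_\varepsilon$ with $h(t)<\varepsilon\log t$ for all $t\ge T_\varepsilon$, use the boundedness of $h$ on the compact interval $[0,T_\varepsilon]$ to get a constant $C_\varepsilon$, and check that the $C^1$ concave function $\Phi_\varepsilon(t):=\varepsilon\log(1+t)+C_\varepsilon$ dominates $h$ on all of $[0,\infty)$; comparing $\omega$ with the tangent lines of $\Phi_\varepsilon$ then gives $\omega\le\Phi_\varepsilon$, hence $\limsup_{t\to\infty}\omega(t)/\log t\le\varepsilon$, and $\varepsilon\to0$ yields (iii). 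I expect the only slightly delicate part of the whole proof to be this last verification — in particular handling $t\in[0,1]$, where $\log t<0$ forces the passage from $\log t$ to $\log(1+t)$ — but it is routine, and everything else reduces to elementary facts about concave functions.
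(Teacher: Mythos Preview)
Your proof is correct and takes a genuinely different route from the paper's. The paper constructs $\omega$ explicitly: first $\tilde\omega(t)=t$ on $[0,1]$ and $\tilde\omega(t)=1+\frac{1}{\eta(1)}\int_1^t\frac{\eta(u)}{u^2}\,du$ on $[1,\infty)$, then $\omega(t)=\tilde\omega(t)+\sqrt{\log(1+t)}$. Concavity of $\tilde\omega$ is checked using that $u\mapsto\eta(u)/u$ is nonincreasing (this uses concavity of $\eta$), property (ii) is obtained from the lower bound $\tilde\omega(t)\geq\frac{1}{\eta(1)}\cdot\frac{\eta(t)}{t}\log t$ (again using that $\eta(u)/u$ is nonincreasing), and (iii) comes out via L'Hospital. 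Your approach instead packages all the lower bounds needed for (i), (ii), (iv) into the single auxiliary function $h$ and takes its concave envelope, so concavity of $\omega$ is free and the only work goes into (iii), which you handle by dominating $h$ with the concave functions $\varepsilon\log(1+t)+C_\varepsilon$. A pleasant by-product is that your argument uses only continuity, nonnegativity, and $\eta(t)/t\to 0$; the concavity, monotonicity, and nonconstancy hypotheses on $\eta$ are never invoked, whereas the paper's integral construction needs $\eta(1)>0$ and the monotonicity of $\eta(u)/u$. The paper's version, on the other hand, gives a completely explicit $\omega$ in terms of $\eta$, which your concave-envelope construction does not.
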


\begin{proof}
 First we will construct a concave modulus $\tilde \omega \in \M$, for which
 conditions (i)-(iii) hold.
Set  $\tilde\omega(t):=t,\ \ t \in [0,1]$,\  and 
$$ \tilde\omega(t):= 1 + \frac{1}{\eta(1)} \cdot \int_1^t  \frac{\eta(u)}{u^2} \ du,\ \ t\geq 1.$$
Then clearly $\tilde\omega \in \M$ and (i) holds for $\tilde\omega$.

Note that the function  $u \mapsto \frac{\eta(u)}{u}$ is nonincreasing on $[1,\infty)$ by concavity of $\eta$.
So the function $u \mapsto \frac{\eta(u)}{u^2}$ is nonincreasing   on $[1,\infty)$ too. Consequently
 we obtain that $\tilde\omega$ is concave on $[1,\infty)$ and since
 $\tilde\omega_+'(1)=1$, we obtain  that $\tilde\omega$ is concave on $[0,\infty)$.

Since $u \mapsto \frac{\eta(u)}{u}$ is nonincreasing on $[1,\infty)$, we obtain
 that, for  $1<u\leq t$, we have
$$\frac{\eta(u)}{u^2} \geq \frac{\eta(t)}{t} \cdot  \frac{1}{u},$$
consequently
$$ \tilde\omega(t) \geq \frac{1}{\eta(1)} \cdot \frac{\eta(t)}{t} \cdot \int_1^t \frac{1}{u}\ du = \frac{1}{\eta(1)} \cdot \frac{\eta(t)}{t} \cdot \log t,\ \ t\geq 1,$$
and so (ii) for $\tilde\omega$ follows. 

Finally, (iii) for $\tilde\omega$ follows by  L'Hospital's rule, since
$$\lim_{t\to \infty} \frac{\int_1^t  \frac{\eta(u)}{u^2} \ du}{\log t} =  \lim_{t\to \infty}  \frac{\frac{\eta(t)}{t^2}}{\frac{1}{t}}=    0.$$
Now it is easy to see that the modulus
$$\omega(t):= \tilde\omega(t) + \sqrt{\log (1+ t)},\ \ t \in [0,\infty],$$
 has all demanded properties.
 \end{proof}

\begin{lemma}\label{zaklmn}
Let $ \eta:[0,\infty)\to [0,\infty) $ be a nondecreasing nonconstant continuous concave function such that $\eta(0) =0$ and $\lim_{t \to \infty} \eta(t)/t = 0$. 
Set $$ H:=\lbrace(x_1,x_2)\in\R^2:x_1>1,\;\vert x_2\vert<\eta(x_1)\rbrace $$ and
\begin{align*}
f(x_1,x_2):=\log(x_1)x_2,\quad (x_1,x_2)\in H.
\end{align*}
Then there exists a concave $ \omega\in\M $ such that
\begin{equation}\label{vlom}
 \lim_{t \to \infty} \omega(t)= \infty,\ \ \ \ \lim_{t \to \infty} \omega(t)/\log t =0,
\end{equation}
 and 
\begin{equation}\label{amb}
\text{$f$ is both $(C\omega)$-semiconvex and $(C\omega)$-semiconcave 
 for some $C>0$.}
\end{equation}
\end{lemma}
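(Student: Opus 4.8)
The plan is to verify the Taylor-type estimate \eqref{ssapr} for $f$ on $H$ with a suitable multiple of the modulus $\omega$ furnished by Lemma \ref{E}, and then to invoke Lemma \ref{aprc}.

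First one records the elementary structural facts. Since $\eta$ is nonconstant, nondecreasing, concave and $\eta(0)=0$, one has $\eta(t)>0$ for all $t>0$, and $t\mapsto\eta(t)/t$ is nonincreasing on $(0,\infty)$, so $\eta(t)\le t\,\eta(1)$ for $t\ge 1$. Concavity of $\eta$ makes $H$ an open convex subset of $\R^2$ (it is the intersection of the half-plane $\{x_1>1\}$ with the convex regions $\{x_2<\eta(x_1)\}$ and $\{x_2>-\eta(x_1)\}$), and it is nonempty since $(2,0)\in H$. Applying Lemma \ref{E} to $\eta$ produces a concave $\omega\in\M$ with properties (i)--(iv); property (i) reads $\omega(t)\ge\min(1,t)$, property (ii) gives a constant $K$ with $\eta(t)\log t\le K\,t\,\omega(t)$ for $t\ge1$, and properties (iv) and (iii) are exactly the two relations in \eqref{vlom}. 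One also notes that concavity of $\omega$ (with $\omega(0)\ge 0$) gives $\omega(2t)\le 2\omega(t)$.

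For the main estimate one fixes $x=(x_1,x_2),y=(y_1,y_2)\in H$ and writes $h=y-x$. Using $f'(x)=(x_2/x_1,\log x_1)$, a direct computation gives
$$R:=f(y)-f(x)-f'(x)(y-x)=x_2 A+h_2 B,\qquad A:=\log\Bigl(1+\tfrac{h_1}{x_1}\Bigr)-\tfrac{h_1}{x_1},\quad B:=\log\Bigl(1+\tfrac{h_1}{x_1}\Bigr).$$
Since $x,y\in H$ forces $x_1+\theta h_1>1$ for all $\theta\in[0,1]$, the inequalities $z/(1+z)\le\log(1+z)\le z$ ($z>-1$) yield
$$|A|\le\frac{h_1^2}{x_1 y_1},\qquad |A|\le\log\frac{x_1}{y_1}\le\log x_1\ \ (\text{if }h_1<0),\qquad |B|\le\frac{|h_1|}{\min(x_1,y_1)}.$$
Combining these with $|x_2|<\eta(x_1)$, $|h_2|<\eta(x_1)+\eta(y_1)$, with $\eta(t)\le t\,\eta(1)$ for $t\ge1$, and with property (ii) of $\omega$, one bounds $|R|$ by a constant multiple of $\|h\|\,\omega(\|h\|)$, after splitting into the cases $\|h\|\le 1$ and $\|h\|>1$ and, within the latter, according to whether $\max(x_1,y_1)\le 2\min(x_1,y_1)$ or not. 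Thus $f$ satisfies \eqref{ssapr} with modulus $C'\omega$ for some $C'>0$, and Lemma \ref{aprc} (applied with the modulus $C'\omega$) then yields that $f$ is both $(2C'\omega)$-semiconvex and $(2C'\omega)$-semiconcave, which is \eqref{amb} with $C=2C'$.

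The delicate point is the regime $\|h\|>1$ in which one of $x_1,y_1$ is much larger than the other. There a crude triangle estimate of the logarithmic terms produces contributions of order $\|h\|\log\|h\|$, which is not $O(\|h\|\,\omega(\|h\|))$ because $\omega(t)=o(\log t)$ by \eqref{vlom}. The resolution uses exactly the two features built into Lemma \ref{E}: if the larger of $x_1,y_1$ exceeds $2\|h\|$, then it must be comparable to the smaller one (since $|x_1-y_1|\le\|h\|$), which keeps all logarithms bounded; and if the larger coordinate $M$ is at most $2\|h\|$, the troublesome term is at most a multiple of $\eta(M)\log M\le K\,M\,\omega(M)\le 2K\|h\|\,\omega(2\|h\|)\le 4K\|h\|\,\omega(\|h\|)$, the estimate $\eta(M)\log M\le K\,M\,\omega(M)$ being precisely property (ii), which was designed for this $\eta$. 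In the complementary case $\|h\|\le1$ everything reduces to a bound of order $\|h\|^2$, absorbed via property (i), namely $\omega(t)\ge t$ for $t\le1$.
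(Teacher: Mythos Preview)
Your proposal is correct and follows essentially the same strategy as the paper: invoke Lemma \ref{E} to produce $\omega$, verify a pointwise estimate on $H$, and conclude via an auxiliary lemma. The only difference is cosmetic: the paper estimates $V=|f'(y)(y-x)-f'(x)(y-x)|$ and appeals to Lemma \ref{post}, whereas you estimate the Taylor remainder $R=|f(y)-f(x)-f'(x)(y-x)|$ and appeal to Lemma \ref{aprc}. Your decomposition $R=x_2A+h_2B$ is clean, but since $R$ is not symmetric in $x,y$ you must treat both signs of $h_1$ (which you do, via the alternative bound $|A|\le\log(x_1/y_1)$ when $h_1<0$); the paper's $V$ is symmetric, so it may and does assume $h_1\ge 0$, which streamlines its case split into $h_1\le 1$, $1\le h_1\le x_1$, $x_1\le h_1$. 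In both arguments the crucial step is the same: when the larger coordinate $M$ is at most a constant times $\|h\|$, the dangerous contribution is controlled by $\eta(M)\log M\le K\,M\,\omega(M)$, which is precisely property (ii) of Lemma \ref{E}.
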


\begin{proof}
By Lemma \ref{E} there exists a concave $ \omega\in\M $ such that
\eqref{vlom}  and conditions  (i), (ii) of Lemma \ref{E} hold. So we
 can find $D>0$ such that
\begin{equation}\label{od}
\eta(t)\log(t)\leq Dt\omega(t)\ \ \ \text{and}\ \ \ \eta(t)\leq Dt,\quad t \geq 1.
\end{equation}
To prove \eqref{amb}, we will use
 Lemma \ref{post}.

So we consider arbitrary 
$x=(x_1,x_2) \in H$,  $y=(y_1,y_2) \in H$ and set $h= (h_1,h_2):= y-x$. By the symmetry,
 we can suppose that $h_1 \geq 0$. 
Now we will estimate from above 
$$ V:= \vert f'(y)(y-x)-f'(x)(y-x)\vert = \vert f'(x+h)(h)-f'(x)(h)\vert.$$
An elementary computation gives
\begin{align*}
V&= \left| \frac{x_2+h_2}{x_1+  h_1}\cdot h_1 + \log(x_1+h_1) h_2 - \frac{x_2}{x_1}\, h_1 
 - \log(x_1) h_2\right|\\
&\leq h_1 \, \left| \frac{x_2+h_2}{x_1+  h_1} - \frac{x_2}{x_1}\right| + |h_2|\,
( \log(x_1+h_1)- \log(x_1)) =: A + B.
\end{align*}
We have
$$ A=  h_1 \, \left|\frac{h_2x_1-h_1x_2}{(x_1+h_1) x_1}\right| 
 \leq \frac{h_1 |h_2|}{x_1+h_1} + \frac{(h_1)^2|x_2|}{(x_1+h_1) x_1}.$$

Using Lemma \ref{E} (i) and $x_1 >1$, we obtain

$$ \frac{h_1 |h_2|}{x_1+h_1} \leq \min(\|h\|,\|h\|^2) = \|h\| \min(1,\|h\|)
 \leq \|h\| \omega(\|h\|)$$
 and, using also $|x_2| \leq \eta(x_1) \leq D x_1$,
$$ \frac{(h_1)^2|x_2|}{(x_1+h_1) x_1}\leq D \,
\frac{(h_1)^2}{x_1+h_1} \leq D \, \min(\|h\|,\|h\|^2) \leq 
 D\, \|h\| \omega(\|h\|),$$
 and consequently  $A \leq (D+1)\, \|h\| \omega(\|h\|)$.
\medskip

If $h_1\leq 1$, then $h_1 \leq \omega(h_1)$ by Lemma \ref{E} (i), and therefore
\begin{align*}
B= \vert h_2\vert \log\bigg(1 + \frac{h_1}{x_1}\bigg) 
\leq\vert h_2\vert\frac{h_1}{x_1}\leq\vert h_2\vert h_1\leq\vert h_2\vert\omega(h_1)\leq \|h\|
\omega(\| h\|).
\end{align*}

If  $1\leq h_1\leq x_1$,  note that Lemma \ref{E} (i) implies $1 \leq \omega(h_1)$
 and obviously $|h_2| \leq 2\eta(x_1+h_1)$. Therefore we obtain, using also \eqref{od},
\begin{align*}
B&= \vert h_2\vert \log\bigg(1 + \frac{h_1}{x_1}\bigg) 
\leq 2\eta(x_1+h_1)\frac{h_1}{x_1}\leq 2\frac{\eta(2x_1)}{x_1}h_1\\
&\leq 4Dh_1\leq 4Dh_1\omega(h_1)\leq 4D\| h\|\omega(\|h\|).
\end{align*}
If  $x_1\leq h_1$, then we obtain, using $|h_2| \leq 2\eta(x_1+h_1)$, \eqref{od} and concavity of
 $\omega$,
\begin{align*}
B&=\vert h_2\vert(\log(x_1+h_1)-\log(x_1))\\
&\leq 2\eta(x_1+h_1)\log(x_1+h_1)\leq 2D(x_1+h_1)\omega(x_1+h_1)\\
&\leq 4Dh_1\omega(2h_1) \leq 8Dh_1\omega(h_1)\leq 8D\|h\|\omega(\|h\|).
\end{align*}
Using the above estimates, we obtain  $V= A+B \leq (9D+1)\, \|h\| \omega(\|h\|) $.
 So Lemma \ref{post} implies that \eqref{amb} holds for $C:= 9D+1$.
\end{proof}

The proof of our main result in general $\R^n$ use an inductive
 argument which needs the following notion.

\begin{definition}\label{gno}
Let $n \in \N$ and $\omega \in \cal M$.
 Then we denote by $\cal G_n(\omega)$ the set of all open convex sets
 $G\subset \R^n$ for which there exist a ray  $\rho \subset G$, $C>0$ and a function $f\in C^1(G)$
which is both $(C\omega)$-semiconvex and  $(C\omega)$-semiconcave and for which 
 $f'$ is  uniformly continuous on $\rho$ with modulus $D\omega$
 for no $D>0$.
\end{definition}

\begin{remark}\label{gimpl}
If $G \in \cal G_n(\omega)$, then the implication \eqref{impl} does not hold. Indeed,
 then the function $f^*:= (1/C) f$ (where $C$ and $f$ are as in Definition \ref{gno}) is both $\omega$-semiconvex and  $\omega$-semiconcave 
 but clearly $f^* \notin C^{1, \omega}(G)$.
 \end{remark}

\begin{remark}\label{pasg}
Note that Remark \ref{onray} immediately implies that, if $\omega\in \M$ has properties
 \eqref{liminf}, then
$\R \times (0,1) \in \cal G_2(\omega)$.
\end{remark}

The following result on  further  unbounded open convex subsets in $\R^2$ easily follows from
 Lemma \ref{zaklmn}.
 
\begin{lemma}\label{ul}
 Let $u$ be 
	 a positive  nondecreasing concave function on $(1,\infty)$ and let $l$ be a negative
	nonincreasing convex function on  $(1,\infty)$
	 such that  $\lim_{x\to \infty} u(x)/x =0$  and  $\lim_{x\to \infty} l(x)/x =0$.
	Then there exists a concave modulus $\omega \in \M$ such that $\lim_{t\to \infty} \omega(t)=
	 \infty$ and
	\begin{equation}\label{ulgo}
	 G:= \{(x,y):\ x>1,\ l(x) < y < u(x)\} \in \cal G_2(\omega).
	\end{equation}
	\end{lemma}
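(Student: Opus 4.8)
The plan is to deduce Lemma~\ref{ul} from Lemma~\ref{zaklmn} by a suitable choice of the modulus $\eta$. Put $\phi:=u-l$ on $(1,\infty)$. Since $u$ is nondecreasing and concave and $l$ is nonincreasing and convex, $\phi$ is positive, nondecreasing and concave on $(1,\infty)$, and $\phi(x)/x=u(x)/x-l(x)/x\to 0$ as $x\to\infty$. The first step — the only genuinely technical one — is a routine construction of a concave, nonconstant $\eta\in\M$ with $\eta(0)=0$, $\lim_{t\to\infty}\eta(t)/t=0$ and
\[
\eta(x)\ \ge\ \phi(x)\ =\ u(x)-l(x),\qquad x>1 .
\]
One takes $\eta$ to equal $\phi$ for large arguments (which forces $\eta(t)/t\to 0$) and, on an initial segment $[0,x_0]$, to be the line through the origin that supports the graph of the concave function $\phi$ at $x_0$ (if some such $x_0>1$ exists; otherwise $\phi(x)/x$ is decreasing on $(1,\infty)$ and one uses the line of slope $\sup_{x>1}\phi(x)/x$ with $x_0$ large). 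Elementary properties of concave functions show this $\eta$ is concave on $[0,\infty)$ and dominates $\phi$ on $(1,\infty)$. With this $\eta$, the set $H=\{(x_1,x_2):x_1>1,\ |x_2|<\eta(x_1)\}$ of Lemma~\ref{zaklmn} contains $G$: for $(x_1,x_2)\in G$ we have $x_1>1$ and $x_2<u(x_1)<u(x_1)-l(x_1)\le\eta(x_1)$, and similarly $-x_2<-l(x_1)<u(x_1)-l(x_1)\le\eta(x_1)$. (Also, $G$ is open and convex, being the strict hypograph of $u$ intersected with the strict epigraph of $l$, and it is unbounded since it contains the ray constructed below.)

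Next I apply Lemma~\ref{zaklmn} with this $\eta$. It yields a concave $\omega\in\M$ with $\lim_{t\to\infty}\omega(t)=\infty$ and $\lim_{t\to\infty}\omega(t)/\log t=0$, and a constant $C>0$, such that $f(x_1,x_2):=\log(x_1)\,x_2$ is both $(C\omega)$-semiconvex and $(C\omega)$-semiconcave on $H$. Since $G\subset H$ is an open convex subset, the defining inequalities hold in particular for all $x,y\in G$, so $f$ restricted to $G$ is again both $(C\omega)$-semiconvex and $(C\omega)$-semiconcave on $G$; moreover $f\in C^1(G)$ (it is even $C^\infty$ on $\{x_1>0\}\supset G$). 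It remains to exhibit a ray in $G$ on which $f'$ is not uniformly continuous with modulus $D\omega$ for any $D>0$. Fix any $p=(p_1,p_2)\in G$ and set $\rho:=\{p+t(1,0):t\ge 0\}=\{(p_1+t,p_2):t\ge 0\}$. Since $u$ is nondecreasing and $l$ is nonincreasing, $l(p_1+t)\le l(p_1)<p_2<u(p_1)\le u(p_1+t)$ for every $t\ge 0$, so $\rho\subset G$. Now $f'(x_1,x_2)=\bigl(x_2/x_1,\ \log x_1\bigr)$, hence for the points $p$ and $p_t:=(p_1+t,p_2)$ of $\rho$ one has $\|p_t-p\|$ equal to a fixed positive multiple of $t$, while $\|f'(p_t)-f'(p)\|$ is bounded below by a fixed positive multiple of $|\log(p_1+t)-\log p_1|=\log(1+t/p_1)$. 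If $f'$ were uniformly continuous on $\rho$ with modulus $D\omega$, this would force $\log(1+t/p_1)$ to be dominated by a fixed multiple of $D\,\omega$ of a fixed multiple of $t$ for all $t\ge 0$, which is impossible because $\omega(t)/\log t\to 0$ as $t\to\infty$. Hence $G\in\mathcal G_2(\omega)$, which is precisely \eqref{ulgo}.

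The only point requiring care is the construction of $\eta$ in the first paragraph — a bookkeeping exercise with supporting lines of the concave function $u-l$. After that, the argument is just: restrict $f$ from $H$ to $G$; take the obvious ray in the recession direction $(1,0)$, which lies in $G$ thanks to the monotonicity of $u$ and $l$; and observe that along this ray $\partial f/\partial x_2=\log x_1$ grows logarithmically while $\omega$ is $o(\log)$ — the very mechanism behind Lemma~\ref{zaklmn}.
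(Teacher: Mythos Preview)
Your proposal is correct and follows essentially the same route as the paper: construct a suitable $\eta$ dominating $u-l$, embed $G$ into the set $H$ of Lemma~\ref{zaklmn}, restrict $f(x_1,x_2)=x_2\log x_1$ to $G$, and exploit that $\partial f/\partial x_2=\log x_1$ grows faster than $\omega$ along a horizontal ray. The only differences are cosmetic: the paper builds $\eta$ by the slightly cleaner device of shifting $h=u-l$ up by a constant on $(2,\infty)$ and taking a linear piece on $[0,2]$ (which avoids the case split in your tangent-line construction), and it uses the concrete ray $[2,\infty)\times\{0\}\subset G$ rather than a generic horizontal ray.
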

	
\begin{proof}
Set  $h(x):= u(x)- l(x),\ x>1$, $a:= \max(h(2), h'_+(2))$ and
 $b:= 2a- h(2)$. Clearly $a>0$, $b>0$.
Let $\eta(x):= ax$ for  $x \in [0,2]$ and $\eta(x):= h(x)+b$ for
 $x \in (2,\infty)$. Then $\eta$ is clearly a nonnegative  
 nonconstant nondecreasing continuous concave function on $[0, \infty)$ such that 
  $\eta(0)=0$, 
  $\lim_{x\to \infty} \eta(x)/x =0$ and
	$$G \subset H:=     
\lbrace(x_1,x_2)\in\R^2:x_1>1,\;\vert x_2\vert<\eta(x_1)\rbrace.$$
Let $f(x_1,x_2):=\log(x_1)x_2,\ (x_1,x_2)\in H$,  and let
 $ \omega\in\M $  be a modulus which corresponds to $\eta$ and $f$
 by Lemma \ref{zaklmn}. 
Suppose to the contrary $G \notin \cal G_2(\omega)$.
Since $G\subset H$ and \eqref{amb} implies 
 that $f$ is both $(C\omega)$-semiconvex and $(C\omega)$-semiconcave 
 on $G$ for some $C>0$, we obtain that 
 $f'$ is uniformly continuous with modulus $D \omega$ on the ray 
$\rho:= [2,\infty) \times \{0\}\subset G$ for some $D>0$.
 Consequently
$$\bigg|\frac{\partial f}{\partial x_2}(t,0)- \frac{\partial f}{\partial x_2}(2,0)\bigg| = |\log t - \log 2| \leq D \omega(t-2),\ \ t>2,$$
 which clearly contradicts \eqref{vlom}.
\end{proof}

The following lemma is geometrically obvious.
\begin{lemma}\label{K}
Let  $V \subset \R^n$ be a two-dimensional Hilbert space and $C\subset V$ a two-dimensional closed convex cone such that $\{v,-v\} \subset C$ for no $v\neq 0$. Then there exists  $0\neq w \in V$ such that 
$$  \spa(\{w\}) \cap C = \{0\}.$$
\end{lemma}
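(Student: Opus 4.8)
The statement concerns a two-dimensional closed convex cone $C$ in a two-dimensional space $V$ which contains no antipodal pair $\{v,-v\}$ with $v\neq 0$; the goal is a line through the origin meeting $C$ only at $0$. The plan is to identify $V$ with $\R^2$ by an isometric linear isomorphism (harmless, since the conclusion is purely linear-algebraic) and then use the standard description of closed convex cones in the plane: a two-dimensional closed convex cone $C\subset\R^2$ is either all of $\R^2$, or a half-plane, or an angular sector $\{s e^{i\alpha}+t e^{i\beta}: s,t\geq 0\}$ of some aperture $\theta\in(0,\pi]$ between two bounding rays. The hypothesis that $C$ contains no pair $\{v,-v\}$ rules out the first two possibilities (both $\R^2$ and a half-plane contain some diameter), and it forces the aperture of the sector to be strictly less than $\pi$: if the two bounding rays were opposite, i.e. $\theta=\pi$, then $C$ would be a half-plane and again contain antipodal points.

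So one may assume $C$ is a closed sector of aperture $\theta<\pi$, say $C=\{(r\cos\varphi,r\sin\varphi): r\geq 0,\ \varphi\in[\alpha,\alpha+\theta]\}$ after a rotation. Then I would take $w$ to be a unit vector whose direction $\varphi_0$ is chosen in the open complementary arc, precisely $\varphi_0:=\alpha+\theta+\tfrac{\pi-\theta}{2}$ (the bisector of the gap); its opposite direction is $\varphi_0+\pi=\alpha-\tfrac{\pi-\theta}{2}$, which also lies strictly outside $[\alpha,\alpha+\theta]$ because $\tfrac{\pi-\theta}{2}>0$. Hence neither $w$ nor $-w$ belongs to $C$, and since $C$ is a cone, no nonzero multiple of $w$ belongs to $C$, i.e. $\spa(\{w\})\cap C=\{0\}$.

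Alternatively, and perhaps cleaner to write since it avoids the case analysis of the structure theorem, I would argue by a separation/support argument: a closed convex cone $C\subsetneq\R^2$ with $\interior C\neq\emptyset$ and $C\neq$ a half-plane is contained in an open half-plane together with its boundary line only along at most a sector; more directly, the polar cone $C^\circ=\{p: \langle p,x\rangle\leq 0\ \forall x\in C\}$ is a nonzero closed convex cone (nonzero because $C$ is not all of $\R^2$), and the condition "$C$ contains no line" is equivalent to "$C^\circ$ has nonempty interior." Picking $p$ in the interior of $C^\circ$ gives $\langle p,x\rangle<0$ for every $x\in C\setminus\{0\}$; then any $w$ with $\langle p,w\rangle=0$ (a one-dimensional space, so such $w\neq 0$ exists) satisfies $\pm w\notin C\setminus\{0\}$, whence $\spa(\{w\})\cap C=\{0\}$.

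The only genuine point to be careful about — the "main obstacle," though it is minor — is justifying that the hypothesis "$\{v,-v\}\subset C$ for no $v\neq 0$" is exactly what excludes the degenerate cases (all of $\R^2$, half-planes, and aperture-$\pi$ sectors) and, equivalently, that it means $C$ contains no line through the origin; once that is pinned down, either the explicit-angle construction or the polar-cone construction finishes the proof in one line. I would write the explicit version for concreteness, since $V$ is two-dimensional and the picture is completely transparent.
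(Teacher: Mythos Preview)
The paper does not actually prove this lemma; it simply declares it ``geometrically obvious'' and moves on. Your argument is correct and supplies exactly the kind of justification the paper omits: either the explicit angular-sector description (a two-dimensional closed convex cone containing no line is a sector of aperture $<\pi$, so the bisector of the complementary gap gives the required $w$) or the polar-cone argument (pick $p\in\interior C^\circ$, then any $w\perp p$ works) is a clean one-paragraph proof. There is nothing to compare against on the paper's side, and no gap in what you wrote.
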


\begin{lemma}\label{L}
 Let  $\omega \in \M$ be concave.
 Let $n\geq k\geq2$, let $G \subset \R^n$ be an open convex set, and let
 $L: \R^n \to \R^k$ be a linear surjection. Suppose
 that  $H:= L(G) \in \cal G_{k}(\omega)$ and 
$ \rec(H) \subset L(\rec(G)) $.
 Then $G \in \cal G_n(\omega)$. 
\end{lemma}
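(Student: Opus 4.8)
The plan is to reduce the $n$-dimensional situation to the $k$-dimensional one that we already control by hypothesis. We are given $H = L(G) \in \cal G_k(\omega)$, so by Definition \ref{gno} there exist a ray $\rho \subset H$, a constant $C>0$ and a function $f \in C^1(H)$ which is both $(C\omega)$-semiconvex and $(C\omega)$-semiconcave on $H$ and for which $f'$ is uniformly continuous on $\rho$ with modulus $D\omega$ for no $D>0$. The natural candidate for the witnessing function on $G$ is the composition $\tilde f := f \circ L$. By Lemma \ref{skl}, $\tilde f$ is both $(C'\omega)$-semiconvex and $(C'\omega)$-semiconcave on $G$ for some $C'>0$, and it is clearly $C^1$ on $G$. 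So the only thing to check is that $G$ contains a ray $\tilde\rho$ on which $\tilde f'$ fails to be uniformly continuous with any modulus $D\omega$.

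First I would produce such a ray. Write $\rho = \{p + tv : t \ge 0\}$ with $p \in H$ and $0 \ne v \in \R^k$. The idea is to lift both $p$ and the direction $v$ through $L$ in a way that keeps the lifted ray inside $G$. Lifting the direction is where the recession-cone hypothesis enters: since $\rho \subset H$, the direction $v$ lies in $\rec(H)$ (indeed, a ray contained in an open convex set points in a recession direction — this follows from Lemma \ref{zvrc}(iii), or one can just observe $p + tv \in H$ for all $t \ge 0$). By the assumption $\rec(H) \subset L(\rec(G))$, we may choose $w \in \rec(G)$ with $Lw = v$. For the basepoint, pick any $q \in G$ with $Lq$ close enough to $p$; more precisely, since $L$ is an open map, $L(G)$ is open and $p \in L(G)$, so there is $q_0 \in G$ with $Lq_0 = p$... actually it is cleaner to pick $q_0 \in G$ with $Lq_0 = p$ directly (surjectivity of $L$ together with $p \in L(G)$ does not immediately give a preimage in $G$, but openness of $G$ and of $L$ lets us adjust: choose any preimage of $p$ under $L$ and translate it by a small element of $\ker L$ into $G$, using that $G$ is open). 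Then set $\tilde\rho := \{q_0 + tw : t \ge 0\}$. Since $w \in \rec(G)$ and $q_0 \in G$, we have $\tilde\rho \subset G$, and $L(\tilde\rho) = \{p + tv : t\ge 0\} = \rho$.

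Next I would transfer the non-uniform-continuity statement along $L$. For $x, y \in \tilde\rho$ we have $\tilde f'(x) - \tilde f'(y) = (f'(Lx) - f'(Ly)) \circ L$, hence $\|\tilde f'(x) - \tilde f'(y)\| \ge c \, \|f'(Lx) - f'(Ly)\|$ for a constant $c>0$ depending only on $L$ (for instance $c = 1/\|L\|$ applied to a unit vector where $f'(Lx)-f'(Ly)$ nearly attains its norm, using surjectivity of $L$). Also $\|x - y\|$ and $\|Lx - Ly\|$ are comparable along the ray since both are proportional to the difference of the $t$-parameters. Therefore, if $\tilde f'$ were uniformly continuous on $\tilde\rho$ with modulus $D\omega$, then $f'$ would be uniformly continuous on $\rho = L(\tilde\rho)$ with modulus $D'\omega$ for some $D'>0$ (here concavity of $\omega$ is convenient: $\omega(\lambda t) \le \max(1,\lambda)\,\omega(t)$, so a linear rescaling of the argument only costs a constant factor), contradicting the choice of $\rho$. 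Hence no such $D$ exists, $\tilde f$ witnesses $G \in \cal G_n(\omega)$, and the proof is complete.

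I expect the main obstacle to be the bookkeeping in the basepoint lift — ensuring one can choose $q_0 \in G$ with $Lq_0 = p$ (or close enough that the lifted ray still lies in $G$ after an arbitrarily small translation inside the open set $G$) — together with making the two comparability estimates ($\|\tilde f'(x)-\tilde f'(y)\| \gtrsim \|f'(Lx)-f'(Ly)\|$ and the compatibility of the moduli under the linear rescaling of arguments) fully rigorous; the recession-cone step, which supplies the lifted direction $w$, is the conceptual heart but is short given the hypothesis $\rec(H) \subset L(\rec(G))$.
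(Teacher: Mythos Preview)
Your approach is essentially the same as the paper's: lift the witnessing function via $\tilde f=f\circ L$, lift the ray by choosing $\tilde v\in\rec(G)$ with $L\tilde v=v$ and a basepoint $\tilde a\in G$ with $L\tilde a=p$, and then compare norms using injectivity of the dual map $L^*$ together with the concavity estimate $\omega(\lambda t)\le\max(1,\lambda)\omega(t)$. The one point you overthink is the basepoint lift: $p\in H=L(G)$ \emph{by definition} means there exists $q_0\in G$ with $Lq_0=p$, so no approximation or openness argument is needed there.
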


\begin{proof}
 Since  $H \in \cal G_{k}(\omega)$, there exist a ray  $\rho \subset H$ and a function $f\in C^1(H)$
which is both $\omega$-semiconvex and  $\omega$-semiconcave and for which 
 $f'$ is  uniformly continuous on $\rho$ with modulus $D\omega$
 for no $D>0$.
Set
$$ \tilde f(y):= f(L(y)),\ \ y \in G.$$
Then  $\tilde f \in C^1(G)$ and there exists $C>0$ such that 
 $\tilde f$ is both $(C\omega)$-semiconvex and  $(C\omega)$-semiconcave 
on $G$ by Lemma \ref{skl}.

We can write  $\rho= \{a+ t\, v:\ t \in [0,\infty)\}$,
 where $a\in H$ and $ v \in \R^{k} \setminus \{0\}$. Then 
 $v \in \rec(H)$ by Lemma \ref{zvrc} (iii)  and so, by the assumptions, there exists
 $\tilde v \in \rec(G)$ with $L(\tilde v) =v$. Choose  $\tilde a \in G$                    
 with  $L(\tilde a)=a$. Then  $\tilde \rho:= \{\tilde a + t\, \tilde v:\ t \in [0,\infty)\} \subset G$.  It is
 easy to see that  $L(\tilde \rho) = \rho$ and, for $\alpha:= \|\tilde v\|/\|v\|$,
\begin{equation}\label{roroh}
\alpha \cdot \|L(\tilde x) - L(\tilde y)\| = \|\tilde x-\tilde y\|\ \ \text{for each}\ \ 
 \tilde x, \tilde y \in \tilde \rho.
\end{equation}

Now suppose, to the contrary, that $G \notin \cal G_n(\omega)$. 
Then there exists $\tilde D>0$, such that
\begin{equation}\label{Dst}
\| (\tilde f)'(\tilde x) - (\tilde f)'(\tilde y)\| \leq \tilde D\, \omega(\|\tilde x-\tilde y\|),\ \ \ \tilde x, \tilde y \in \tilde \rho.
\end{equation}
Consider the dual mapping $L^*: (\R^k)^* \to (\R^n)^*$ defined by $L^*(\vf):= \vf \circ L$,
 $\vf \in  (\R^k)^*$. Since $L$ is surjective, $L^*$ is injective and so 
 $L^*: (\R^k)^* \to  L^*((\R^k)^*)$ is a linear isomorphism. So there exists $\beta>0$ 
 such that
\begin{equation}\label{dub}
 \beta \|\vf \circ L\| \geq \|\vf\|\ \ \text{for each}\ \ \vf \in (\R^k)^*.
\end{equation}
Now consider arbitrary  $x,y \in \rho$ and find $\tilde x,\tilde y \in \tilde \rho$ such that
 $L(\tilde x)= x$, $L(\tilde y)=y$.  Using \eqref{dub} for $\vf:= f'(x) - f'(y)$, \eqref{Dst},
\eqref{roroh} and the concavity of $\omega$, we obtain
\begin{align*}
\| f'(x)- f'(y ) \| &\leq \beta \| (f'(x)- f'(y )) \circ L \|=  
\beta \|( f'(L(\tilde x))- f'(L(\tilde y)) \circ L\| \\
&= \beta 
\| (\tilde f)'(\tilde x) - (\tilde f)'(\tilde y)\|\leq \beta  \tilde D \omega(\|\tilde x-\tilde y\|)\\
&= \beta  \tilde D \omega(\alpha \|x-y\|) \leq \beta  \tilde D \max(1,\alpha) \omega(\|x-y\|).
\end{align*}
So $f'$ is uniformly continuous on $\rho$ with modulus
 $D \omega$ for $D:= \beta  \tilde D \max(1,\alpha)  $, which
 is a contradiction.
\end{proof}

As an easy consequence we obtain the following fact.
\begin{lemma}\label{A}
 Let  $\omega \in \M$ be concave. Let $n\geq 2$, $G \in \cal G_n(\omega)$,   and let $A: \R^n \to \R^n$ be an affine bijection. Then  $A(G) \in \cal G_n(\omega)$.
\end{lemma}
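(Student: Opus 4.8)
The plan is to deduce this from Lemma~\ref{L} with $k=n$. Write the affine bijection as $A(x)=Tx+b$, where $T\colon\R^n\to\R^n$ is a linear bijection and $b\in\R^n$, and apply Lemma~\ref{L} with the open convex set there taken to be $A(G)$ and with $L:=T^{-1}$ (a linear surjection of $\R^n$ onto $\R^n$, so the requirement $k=n\geq2$ is met, $A(G)$ being open and convex as the image of $G$ under an affine homeomorphism). Since $\omega$ is concave by hypothesis, it then only remains to verify the two assumptions of Lemma~\ref{L}: that $H:=T^{-1}(A(G))\in\cal G_n(\omega)$ and that $\rec(H)\subset T^{-1}(\rec(A(G)))$. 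Note here that $G\in\cal G_n(\omega)$ forces $G$ to contain a ray and hence to be unbounded, so that Lemma~\ref{rchl} will be applicable to it below.

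Next I would record the trivial fact that $\cal G_n(\omega)$ is translation invariant. If $G\in\cal G_n(\omega)$ is witnessed by a ray $\rho\subset G$, a constant $C>0$ and a function $f\in C^1(G)$ as in Definition~\ref{gno}, then for any $c\in\R^n$ the set $G+c$ is witnessed by the ray $\rho+c$, the same constant $C$, and the function $y\mapsto f(y-c)$. Indeed, the defining inequality in Definition~\ref{omse} involves only convex combinations and the norms $\|x-y\|$, all unchanged under translation, so $y\mapsto f(y-c)$ is again both $(C\omega)$-semiconvex and $(C\omega)$-semiconcave on $G+c$; and its derivative at $y$ equals $f'(y-c)$, so uniform continuity of this derivative on $\rho+c$ with a modulus $D\omega$ would force the same for $f'$ on $\rho$, contradicting $G\in\cal G_n(\omega)$. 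Since $H=T^{-1}(A(G))=T^{-1}(T(G)+b)=G+T^{-1}(b)$ is a translate of $G$, this yields $H\in\cal G_n(\omega)$.

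Finally, I would check the recession-cone condition, in fact with equality. Directly from Definition~\ref{reccon} one has $\rec(H)=\rec(G+T^{-1}(b))=\rec(G)$ and likewise $\rec(A(G))=\rec(T(G)+b)=\rec(T(G))$, while $\rec(T(G))=T(\rec(G))$ by Lemma~\ref{rchl} applied to the surjection $T$, whose kernel $\{0\}$ meets $\rec(G)$ only in $\{0\}$ (alternatively this last identity is immediate from Definition~\ref{reccon}). Hence $T^{-1}(\rec(A(G)))=T^{-1}(T(\rec(G)))=\rec(G)=\rec(H)$, so both hypotheses of Lemma~\ref{L} hold and we conclude $A(G)\in\cal G_n(\omega)$. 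The only point requiring any attention is the routine bookkeeping in passing between $A(G)$, $T(G)$, $G+T^{-1}(b)$ and their recession cones; there is no genuine obstacle here, which is precisely why this is an ``easy consequence'' of Lemma~\ref{L}.
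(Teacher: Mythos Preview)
Your proof is correct and follows essentially the same approach as the paper's. The paper's proof simply says: if $A$ is linear the claim follows from Lemma~\ref{L}, if $A$ is a translation the claim is obvious, and the general case follows by composition. You carry out exactly these two ingredients---translation invariance (spelled out carefully) and Lemma~\ref{L} applied with $L=T^{-1}$---but organize them into a single application of Lemma~\ref{L} rather than two sequential steps; the substance is the same.
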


\begin{proof}
If $A$ is linear, then the assertion easily follows from Lemma \ref{L}. If $A$ is a translation, then the proof is obvious. So the lemma
 clearly follows.
 \end{proof}

\begin{proposition}\label{P}
Let $n\geq 2$ and let $G \subset \R^n$ be an unbounded 
open convex set which contains no translation of a convex cone with nonempty  interior.
Then there exists a concave modulus $\omega \in \cal M$ with
 $\lim_{t \to \infty} \omega(t) = \infty$ such that
  $G \in \cal G_n(\omega)$.
\end{proposition}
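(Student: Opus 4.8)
The plan is to argue by induction on the dimension $n$, using the dimension of the recession cone $\rec(G)$ as the quantity that lets us peel off directions and project down.

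\textbf{Base case and the structure of the induction.} First I would observe that, since $G$ is unbounded, Lemma \ref{zvrc}(iv) gives $\rec(G) \setminus \{0\} \neq \emptyset$, and since $G$ contains no translation of a cone with nonempty interior, Lemma \ref{zvrc}(vi) gives $\dim(\rec(G)) \leq n-1$. Set $k := \dim(\spa(\rec(G))) = \dim(\rec(G))$ (the equality is Lemma \ref{zvrc}(v)), so $1 \leq k \leq n-1$. I would treat the case $n=2$ first, where necessarily $k=1$: then $\rec(G)$ is either a single ray or a line through the origin. Up to an affine bijection (and here Lemma \ref{A} does the bookkeeping, since replacing $G$ by $A(G)$ preserves membership in $\cal G_n(\omega)$), one can arrange that this recession direction is, say, the positive $x_1$-direction, and then $G$ is exactly a region of the form $\{(x,y): x > 1,\ l(x) < y < u(x)\}$ after a further translation, where $u$ is concave nondecreasing, $l$ is convex nonincreasing, and both $u(x)/x, l(x)/x \to 0$ (the sublinearity is forced precisely because $G$ has no interior-point cone, i.e.\ the recession cone is not two-dimensional). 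Lemma \ref{ul} then directly produces the required concave modulus $\omega$ with $\lim_{t\to\infty}\omega(t) = \infty$ and $G \in \cal G_2(\omega)$. The case where $\rec(G)$ is a full line rather than a ray is handled similarly (or reduced to the ray case), taking a little care with the affine normalization.

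\textbf{Inductive step.} Suppose $n \geq 3$ and the proposition holds in all dimensions $\geq 2$ that are smaller than $n$. Given $G \subset \R^n$ as in the statement, with $k := \dim(\rec(G)) \in \{1,\dots,n-1\}$, I want to project $G$ along directions transverse to the recession cone down to a lower-dimensional set to which either the inductive hypothesis or the $n=2$ analysis applies. The projection must be chosen so that Lemma \ref{rchl} applies, i.e.\ so that $L^{-1}(\{0\}) \cap \rec(G) = \{0\}$; then $\rec(L(G)) = L(\rec(G))$, which is exactly the hypothesis $\rec(H) \subset L(\rec(G))$ (in fact equality) needed to invoke Lemma \ref{L}. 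Concretely: if $k < n-1$, I would first use a linear surjection $L_1: \R^n \to \R^{n-1}$ whose kernel is a line meeting $\rec(G)$ only at $0$ — such a line exists because $\spa(\rec(G))$ has dimension $k < n$, so one can pick a direction outside it, actually outside $\spa(\rec(G))$, giving $\Ker L_1 \cap \rec(G) = \{0\}$. Then $H_1 := L_1(G) \subset \R^{n-1}$ is open convex, unbounded (its recession cone is $L_1(\rec(G)) \neq \{0\}$), and still contains no translation of an interior-point cone (because $\dim \rec(H_1) = \dim L_1(\rec(G)) = k < n-1$, using that $L_1$ is injective on $\spa(\rec(G))$). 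By induction there is a concave $\omega$ with $\lim_{t\to\infty}\omega(t)=\infty$ and $H_1 \in \cal G_{n-1}(\omega)$; then Lemma \ref{L} gives $G \in \cal G_n(\omega)$ and we are done. The remaining case is $k = n-1$, where $\rec(G)$ spans a hyperplane; here I would instead project onto a two-dimensional subspace. Write $V := \spa(\rec(G))$, an $(n-1)$-dimensional subspace, and I need to choose a complement structure so that a linear surjection $L: \R^n \to \R^2$ has $\Ker L \cap \rec(G) = \{0\}$ while $L(G) \in \cal G_2(\omega)$ for a suitable $\omega$.

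\textbf{The main obstacle: the case $\dim\rec(G) = n-1$.} This is where Lemma \ref{K} enters and where the real work lies. Since $\rec(G)$ is a closed convex cone of dimension $n-1 \geq 2$ which (having no interior in $\R^n$ but, more importantly in the relevant reduced situation) contains no line — or rather, one must locate a $2$-dimensional slice of $\rec(G)$ that is ``pointed'' (contains no pair $\{v,-v\}$) — Lemma \ref{K} supplies, within a suitable two-dimensional subspace $W$ of $V$ on which the $2$-dimensional cone $C := \rec(G) \cap W$ is pointed, a vector $w$ with $\spa(\{w\}) \cap C = \{0\}$. One then builds the surjection $L: \R^n \to \R^2 \cong W$ as (orthogonal, say) projection onto $W$ composed with a further adjustment so that $\Ker L$ meets $\rec(G)$ trivially: the point of choosing $w$ and $C$ carefully is exactly to guarantee $\Ker L \cap \rec(G) = \{0\}$, which is what Lemma \ref{rchl} needs. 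Then $H := L(G) \subset \R^2$ is open convex with $\rec(H) = L(\rec(G))$ a pointed cone of dimension $\leq 2$ but containing no line in the relevant direction, hence — after checking it is unbounded and contains no two-dimensional cone, i.e.\ $\dim\rec(H) \leq 1$, which follows from the pointedness and the transversality — it falls under the $n=2$ analysis (Lemma \ref{ul} via the affine normalization), producing $\omega$ with $H \in \cal G_2(\omega)$. Finally Lemma \ref{L} (whose hypothesis $\rec(H) \subset L(\rec(G))$ is met, with equality, by Lemma \ref{rchl}) upgrades this to $G \in \cal G_n(\omega)$. I expect the delicate points to be: (a) verifying in each sub-case that $\dim\rec(H) \leq n-2$ (resp.\ $\leq 1$) so that the target set again satisfies the ``no interior-point cone'' hypothesis, which is exactly what keeps the induction going and what forces the sublinearity of the boundary functions $u,l$; and (b) the correct choice of $L$ in the $k=n-1$ case so that simultaneously $\Ker L \cap \rec(G) = \{0\}$ and $L(G)$ is ``thin'' — this is precisely the geometric content extracted from Lemma \ref{K}. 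Everything else is routine bookkeeping with affine maps (Lemma \ref{A}) and the recession-cone lemmas.
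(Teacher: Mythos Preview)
Your overall architecture---induction on $n$ using Lemmas \ref{L}, \ref{A}, \ref{K}, \ref{rchl}, \ref{ul}---is the same as the paper's, and your base case and your sub-case $k<n-1$ are essentially correct. But the sub-case $k=n-1$ has a genuine gap, and your case split is the wrong one.

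The problem is that you split on $k=\dim\rec(G)$, whereas the paper splits first on whether $\rec(G)$ contains a line. When $k=n-1$ and $\rec(G)$ \emph{does} contain a line, your plan (project directly to $\R^2$ with $\Ker L\cap\rec(G)=\{0\}$) cannot work. Take $n\geq 3$ and $G=(0,1)\times\R^{n-1}$: then $\rec(G)=\{0\}\times\R^{n-1}$ has $k=n-1$, but every $2$-dimensional slice of $\rec(G)$ is a full plane, so Lemma \ref{K} is inapplicable; and any $(n-2)$-dimensional kernel contained in $\spa(\rec(G))$ (which is forced if you want $\dim L(\rec(G))\leq 1$) must meet the lineality space of $\rec(G)$ nontrivially, so $\Ker L\cap\rec(G)\neq\{0\}$ and Lemma \ref{rchl} fails. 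The paper handles this situation by a separate case (b1): if $\rec(G)$ contains a line, normalize so $G=H\times\R$, project to $H\subset\R^{n-1}$, and verify the hypothesis $\rec(H)\subset L(\rec(G))$ of Lemma \ref{L} \emph{directly} from the product structure (Lemma \ref{rchl} is not used here precisely because its hypothesis fails).

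Even in the pointed sub-case of $k=n-1$, your description of how Lemma \ref{K} is used is garbled. You propose projecting \emph{onto} a $2$-plane $W$, but Lemma \ref{K} produces a vector $w\in W$ with $\spa\{w\}\cap(W\cap\rec(G))=\{0\}$; this $w$ is the \emph{kernel} direction for a projection $L:\R^n\to\R^{n-1}$ (dropping one dimension, not $n-2$), and the point of choosing $w$ inside $\spa\{v_1,v_2\}\subset\spa(\rec(G))$ is that it forces $L(v_1),L(v_2)$ to be dependent, hence $\dim L(\rec(G))\leq n-2$. The paper therefore always reduces by exactly one dimension per step; your jump straight to $\R^2$ is not what Lemma \ref{K} delivers.
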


\begin{proof}
 We will proceed by induction on $n$.

a) Let $n=2$. Then our assumptions and Lemma \ref{zvrc} (iv), (vi) imply that $\dim (\rec(G)) = 1$. On account of Lemma \ref{A}   we can suppose $(1,0) \in \rec(G)$.
 Now we will distinguish two cases.

 If also $-(1,0) \in \rec(G)$, then clearly $G = \R \times H$ for an
 open convex set $H\subset \R$. Since $\dim (\rec(G)) = 1$, $H$ 
  is a bounded open interval. By  Lemma \ref{A} we can suppose $H=(0,1)$ and so $G \in \cal G_2(\omega)$  e.g. for $\omega(t) = \sqrt t$ by  Remark \ref{pasg}.
	
	If $-(1,0) \notin \rec(G)$,  then  we can suppose (by Lemma \ref{A}) that 
	$\{x\in \R: (x,0) \in G\} = (1,\infty)$. We can also suppose that
	 a support line to $\overline G$ at $(1,0)$ is parallel to the $y$-axis. For $x>1$, set  $u(x): = \sup\{y: (x,y) \in G\}$   and
	$l(x): = \inf\{y: (x,y) \in G\}$. It is easy to show that $u$ is
	 a positive nondecreasing concave function on $(1,\infty)$, $l$ is a negative nonincreasing
	 convex function on  $(1,\infty)$ and 
	$$ G= \{(x,y):\ x>1,\ l(x) < y < u(x)\}.$$
  Then  $A:=\lim_{x\to \infty} u(x)/x =0$, since otherwise
	 $(1,A/2) \in \rec(G)$ and thus $\dim (\rec(G))=2$. Similarly
	 we obtain $\lim_{x\to \infty} l(x)/x =0$. 
	 So our assertion follows from Lemma \ref{ul}.

	b) Now suppose that $n\geq 3$  is given and that the proposition
	 ``holds for $n-1$''. We will distinguish two cases.
	\smallskip
	
	\ \ b1) There exists $v\neq 0$ such that $\{v,-v\} \subset  \rec (G)$ (i.e., $G$ contains a line). 
	
	By Lemma \ref{A}, we can suppose that $v= (0,\dots,0,1)$ and thus
	 there exists an open convex set $H \subset \R^{n-1}$ such that $G= H \times \R$.
	Then clearly $\dim (\rec(H)) < n-1$ and so the inductive assumption gives that $H \in \cal G_{n-1}(\omega)$ for some concave  $\omega \in \cal M$ with
 $\lim_{t \to \infty} \omega(t) = \infty$.    Let $L(x_1,\dots,x_n):= (x_1,\dots,x_{n-1})$. 
Then clearly $L(G)=H$
	 and   $ \rec(H) \subset L(\rec(G)) $.
 Therefore $G \in \cal G_n(\omega)$ by Lemma \ref{L}. 
	
	\smallskip
	
	\ \ b2) There is no $v\neq 0$ such that $\{v,-v\} \subset  \rec (G)$.
	
	We will show that there exists  a linear surjection  $L: \R^n \to \R^{n-1}$ such that
\begin{equation}\label{ol1}
L^{-1} (\{0\}) \cap \rec(G) = \{0\}\ \  \text{and}
\end{equation}
\begin{equation}\label{ol2}
\ \ \dim (L(\rec(G))) < n-1.
\end{equation}
If $\dim(\rec(G)) < n-1$, it is easy  (using Lemma \ref{zvrc} (v)) to find $L$ satisfying 
 \eqref{ol1} and observe that \eqref{ol2}  holds.

In the opposite case Lemma \ref{zvrc} (v),(vi) imply that $\dim(\rec(G)) = n-1$ and there exist  linearly independent vectors
 $v_1,\dots,v_{n-1} \in \rec(G)$. Now we apply  Lemma \ref{K}
 to the space $V:= \spa \{v_1,v_2\}$ and the cone  $C:=V \cap\,  \rec(G)$ (which is closed by Lemma \ref{zvrc} (ii))
 and obtain a corresponding vector $w$. Then any linear surjection  $L: \R^n \to \R^{n-1}$ 
 with $L(w)=0$  clearly satisfies \eqref{ol1}.  Moreover, $L(w)=0$ implies that
 the vectors $L(v_1)$ and $L(v_2)$ are linearly dependent and 
consequently $\dim ( L(\rec(G))  ) < n-1$.

So there exists $L$ for which \eqref{ol1} and \eqref{ol2} hold.
 Using \eqref{ol1} and Lemma \ref{rchl} we obtain
 $\rec(L(G)) = L(\rec(G))$. So $\dim (\rec(L(G)) ) < n-1$ and consequently 
 $L(G) \in \cal G_{n-1}(\omega)$  for some  concave  $\omega \in \cal M$ with
 $\lim_{t \to \infty} \omega(t) = \infty$   by the induction assumption. 
 Therefore we obtain
 $G \in \cal G_{n}(\omega)$ by   
 Lemma \ref{L}.
\end{proof}

\section{Consequences and open questions}\label{posl}
As already mentioned,  our results are very closely related to  a first-order quantitative
 converse  Taylor theorem and to the problem whether 
$f\in C^{1,\omega}(G)$  whenever $f$ is continuous and smooth in a corresponding sense on all lines.

To describe precisely these relations, first
recall \cite[Definition 121]{HJ} in the special case of real-valued functions and
 $p=1$ (which deal with  approximation of a function by polynomials of  degree
 $p\leq 1$).
\begin{definition}\label{UT}
Let $X$ be a normed linear space and $G\subset X$ an open set. We say that a function
 $f: G \to \R$ is $UT^1$-smooth on $G$ with modulus $\omega \in \M$ (or shortly $UT^1_{\omega}$-smooth) if for each $x \in G$
 there exist $b \in \R$ and $x^* \in X^*$  such that
\begin{equation}\label{bxs}
  |f(x+h) - (b+ x^*(h))| \leq \|h\|\, \omega(\|h\|)\ \ \ \text{whenever}\ \ \ x+h \in G.
	\end{equation}
	\end{definition}
	
	\begin{remark}\label{prf}
Note that if \eqref{bxs} holds then clearly $b=f(x)$ and $x^*= f'(x)$.
So $f$ is $UT^1_{\omega}$-smooth on $G$ if and only if 
 \eqref{ssapr} holds.
\end{remark}
Further, we will use the following terminology.  
\begin{definition}\label{smol}
Let $X$ be a normed linear space, $G\subset X$ an open convex set,
 $\omega \in \M$, and let $f$ be a function on $G$. 
 We will say that $f$ is $C^{1,\omega}_*$-smooth on all lines if,
 for every $a \in G$ and $v\in X$, $\|v\|=1$, the function
 $f_{a,v}: t \mapsto f(a+tv)$ defined on the open interval
 $\{t\in \R:\ a+tv \in G\}$ is $C^{1,\omega}_*$-smooth.
\end{definition}
Using this notation, we can reformulate some already mentioned facts as follows.
\begin{proposition}\label{skoekv}
Let $X$ be a normed linear space, $G\subset X$ an open convex set,
 $\omega \in \M$, and let $f$ be a continuous function on $G$.
Then the following implications hold.
\begin{enumerate}
\item[(i)] If $f$ is both $\omega$-semiconvex and $\omega$-semiconcave, then $f$ is $UT^1_{\omega}$-smooth.
\item[(ii)] If $f$ is $UT^1_{\omega}$-smooth, then 
 $f$ is both $(2\omega)$-semiconvex and $(2\omega)$-semiconcave.
\item[(iii)] If $f$ is $C^{1,\omega}_*$-smooth on all lines,
 then $f$ is both $\omega$-semiconvex and $\omega$-semiconcave.
\item[(iv)] If $f$ is both $\omega$-semiconvex and $\omega$-semiconcave, then $f$ is $C^{1,2\omega}_*$-smooth on all lines.
\end{enumerate}
\end{proposition}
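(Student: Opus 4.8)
The plan is to prove each of the four implications in Proposition \ref{skoekv} by simply invoking the machinery already assembled in the Preliminaries. All four are essentially restatements of Lemmas \ref{apr}, \ref{aprc}, and \ref{post} together with Remark \ref{prf}, so the ``proof'' is really a matter of bookkeeping rather than new work.

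For (i), I would argue as follows: if $f$ is both $\omega$-semiconvex and $\omega$-semiconcave, then Lemma \ref{apr} gives that $f$ is Fr\'echet differentiable and that \eqref{ssapr} holds. By Remark \ref{prf}, the validity of \eqref{ssapr} is exactly the assertion that $f$ is $UT^1_\omega$-smooth (taking $b = f(x)$ and $x^* = f'(x)$ in Definition \ref{UT}), so we are done. For (ii), if $f$ is $UT^1_\omega$-smooth, then again by Remark \ref{prf} inequality \eqref{ssapr} holds (and in particular $f$ is differentiable, which is needed to state \eqref{ssapr}); then Lemma \ref{aprc} applied to $f$ with this modulus yields that $f$ is both $(2\omega)$-semiconvex and $(2\omega)$-semiconcave. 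Note that one should first check that $f$ is Fr\'echet differentiable before appealing to \eqref{ssapr}; this is part of the content of Remark \ref{prf}, since \eqref{bxs} forces $x^* = f'(x)$.

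For (iii), suppose $f$ is $C^{1,\omega}_*$-smooth on all lines in the sense of Definition \ref{smol}; that is, each $f_{a,v}$ is $C^{1,\omega}_*$-smooth. By \eqref{limpl} applied on $\R$ to each $f_{a,v}$, every $f_{a,v}$ is both $\omega$-semiconvex and $\omega$-semiconcave, and hence by Lemma \ref{pp} (applied to $f$ and to $-f$) the function $f$ is both $\omega$-semiconvex and $\omega$-semiconcave on $G$. For (iv), if $f$ is both $\omega$-semiconvex and $\omega$-semiconcave, then Lemma \ref{apr} gives that \eqref{ssapr} holds, and the computation carried out inside the proof of Lemma \ref{post} (the derivation of \eqref{hlnap}) shows that from \eqref{ssapr} one gets, for each $a$ and $v$, $|(f_{a,v})'(t_2) - (f_{a,v})'(t_1)| \le \omega(|t_2 - t_1|)$. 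Actually a cleaner route: by Lemma \ref{aprc} the validity of \eqref{ssapr} already implies $f$ is $UT^1_\omega$-smooth, and then the argument giving \eqref{hlnap} applies; alternatively one applies Lemma \ref{aprc} to get \eqref{smder} with $2\omega$ and then \eqref{hlnap} from the proof of Lemma \ref{post} gives $C^{1,2\omega}_*$-smoothness on all lines directly.

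I do not anticipate a genuine obstacle here; the only care needed is in (iv), where the precise constant ($2\omega$ rather than $\omega$) must be tracked through Lemma \ref{aprc}, and in making sure that differentiability is established before one writes down \eqref{ssapr}. The cleanest presentation is probably: (i) $=$ Lemma \ref{apr} $+$ Remark \ref{prf}; (ii) $=$ Remark \ref{prf} $+$ Lemma \ref{aprc}; (iii) $=$ \eqref{limpl} $+$ Lemma \ref{pp}; (iv) $=$ Lemma \ref{apr} $+$ Remark \ref{prf} $+$ Lemma \ref{aprc} $+$ the line-differentiability computation \eqref{hlnap} from the proof of Lemma \ref{post}.
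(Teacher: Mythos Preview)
Your proposal is correct, and for parts (i)--(iii) it coincides exactly with the paper's proof: (i) is Lemma~\ref{apr} together with Remark~\ref{prf}; (ii) is Remark~\ref{prf} together with Lemma~\ref{aprc}; (iii) is \eqref{limpl} together with Lemma~\ref{pp}.

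For (iv) you take a slightly different route from the paper. The paper simply applies Lemma~\ref{pp} to pass to the one-dimensional functions $f_{a,v}$ (each of which is then both $\omega$-semiconvex and $\omega$-semiconcave on an interval in $\R$) and then invokes Remark~\ref{specimpl}(ii), which asserts directly that on $\R$ such a function is $C^{1,2\omega}_*$-smooth. Your route instead goes through the full-space inequality \eqref{ssapr} (via Lemma~\ref{apr}), then through the derivation of \eqref{smder} with modulus $2\omega$ inside the proof of Lemma~\ref{aprc}, and finally through the line computation \eqref{hlnap} in the proof of Lemma~\ref{post}. Both routes are valid and give the same constant $2\omega$. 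The paper's route is shorter to state but relies on Remark~\ref{specimpl}(ii), which in turn cites an external reference; your route is a bit more circuitous but is fully self-contained within the paper. One minor point: your first formulation of (iv) claims the bound $|(f_{a,v})'(t_2)-(f_{a,v})'(t_1)|\le\omega(|t_2-t_1|)$ directly from \eqref{ssapr}, which is not what the computation in Lemma~\ref{post} actually does (that computation starts from \eqref{smder}, not \eqref{ssapr}); you correctly fix this in your alternative, so the final summary you give is right.
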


\begin{proof}
Assertions (i) and (ii) are reformulations of Lemma \ref{apr} and Lemma \ref{aprc}, respectively.
Using \eqref{limpl} and Lemma \ref{pp} we obtain (iii). Lemma \ref{pp} together with
 Remark  \ref{specimpl} (ii)  give (iv).
\end{proof}
\begin{remark}\label{obrac}
If $\omega$ in Proposition \ref{skoekv} is linear, then the converse of (i) holds by Lemma
 \ref{aprc} and the converse of (iv) holds by Remark \ref{lehimpl} (ii) and Lemma \ref{pp}.
\end{remark}

Using Proposition \ref{skoekv}, Remark \ref{jenapr} and Remark \ref{prf}, we easily see that
\medskip

{\it Theorem \ref{Q} and Corollary \ref{cepr} remain true if we replace the property
 ``$f$ is both
 $\omega$-semiconvex and $\omega$-semiconcave'' by the 
 property that ``$f$ 
is $UT^1$-smooth function with modulus
 $\omega$'' or by the 
 property that
``$f$ is continuous and $C^{1,\omega}_*$-smooth on all lines''.} 
 \medskip

So we obtain  in the very special case $p=1$, $Y=\R$ (and finite modulus $\omega$) a more precise
 qualitative versions of  converse Taylor theorem \cite[Corollary 126]{HJ} and also of 
 \cite[Proposition 12]{JZ} and \cite[Theorem 127]{HJ} (which deal with functions which are
 ``$C^{p,\omega}_*$-smooth on all lines'').

In particular, we obtain the following version of Corollary \ref{cepr}.

\begin{corollary}\label{cepr2} 
Let $ X $ be a normed linear space and $ \omega\in\M $.
 Suppose that  a continuous function $ f $ on $X$ is 
 $C_*^{1,\omega}$-smooth on all lines.
Then $f$ is $C_*^{1,4\omega}$-smooth on $X$.
\end{corollary}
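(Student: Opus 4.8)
The plan is to derive Corollary \ref{cepr2} directly from Corollary \ref{cepr}, using the fact (already recorded in Proposition \ref{skoekv}) that $C^{1,\omega}_*$-smoothness on all lines is a sufficient condition for being simultaneously $\omega$-semiconvex and $\omega$-semiconcave. First I would invoke Proposition \ref{skoekv} (iii): since $f$ is continuous and $C^{1,\omega}_*$-smooth on all lines, $f$ is both $\omega$-semiconvex and $\omega$-semiconcave on $X$. Then Corollary \ref{cepr}, applied with $G=X$, immediately gives that $f$ is Fr\'echet differentiable on $X$ with $f'$ uniformly continuous with modulus $4\omega$, i.e.\ $f\in C^{1,4\omega}_*(X)$.

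In slightly more detail, the only point that requires a word of care is the hypothesis on $\omega$: Corollary \ref{cepr} is stated for $\omega\in\M$, and the $\omega$ appearing in the statement of Corollary \ref{cepr2} is exactly such a modulus, so there is nothing to check there. Likewise, the passage "$C^{1,\omega}_*$-smooth on all lines $\Rightarrow$ both $\omega$-semiconvex and $\omega$-semiconcave" is precisely Proposition \ref{skoekv} (iii), which in turn rests on \eqref{limpl} and Lemma \ref{pp}; so the chain of implications is
$$
f\ \text{is $C^{1,\omega}_*$-smooth on all lines}\ \Longrightarrow\ f\ \text{is both $\omega$-semiconvex and $\omega$-semiconcave}\ \Longrightarrow\ f\in C^{1,4\omega}_*(X),
$$
where the last step is Corollary \ref{cepr}.

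There is essentially no obstacle here: the corollary is a genuine corollary, obtained by composing two already-proved facts. The only mild subtlety worth flagging is bookkeeping about which direction of Proposition \ref{skoekv} one needs — it is the "on all lines $\Rightarrow$ semiconvex/semiconcave" direction (iii), not the converse — and the fact that Corollary \ref{cepr} is exactly the $G=X$ case one wants, with the optimal constant $4$. This also explains, via Remark \ref{opt4}, why the constant $4$ in Corollary \ref{cepr2} cannot be improved even for linear $\omega$.

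Thus the proof reduces to: apply Proposition \ref{skoekv} (iii) to get $\omega$-semiconvexity and $\omega$-semiconcavity of $f$ on $X$, then apply Corollary \ref{cepr}. I would write it as a two-line argument with these two references.
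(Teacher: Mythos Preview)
Your proposal is correct and matches the paper's own argument: the corollary is stated immediately after the paragraph noting that, via Proposition~\ref{skoekv}, Corollary~\ref{cepr} remains true when the semiconvex/semiconcave hypothesis is replaced by ``continuous and $C^{1,\omega}_*$-smooth on all lines''. Your two-line derivation (Proposition~\ref{skoekv}\,(iii) followed by Corollary~\ref{cepr}) is exactly this.
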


 Further, Proposition \ref{skoekv}  shows that our basic result Theorem \ref{hlav} can be reformulated in the following
 form.  

\begin{theorem}\label{hlavp}
Let $G \subset \R^n$ ($n\geq 2$) be an unbounded open convex set which does not contain a translation of a convex cone with nonempty interior. Then there exists a concave modulus $\omega \in \M$
 with $\lim_{t \to \infty} \omega(t) = \infty$ and a continuous function $f$
 such that
\begin{enumerate}
\item[(i)]
$f$ is $C^{1,\omega}_*$-smooth on all lines,
\item[(ii)]
$f$ is $UT^1$-smooth function with modulus
 $\omega$ and
 \item[(iii)]
  $f \notin C^{1,\omega}(G)$.
\end{enumerate} 
\end{theorem}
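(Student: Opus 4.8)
The plan is to derive Theorem \ref{hlavp} as a more or less immediate reformulation of Theorem \ref{hlav} (equivalently, of Proposition \ref{P} via Remark \ref{gimpl}), using Proposition \ref{skoekv} to translate the conclusion ``$f$ is both $\omega$-semiconvex and $\omega$-semiconcave'' into the two stronger-looking statements (i) and (ii). So first I would invoke Theorem \ref{hlav} to obtain a concave modulus $\omega \in \M$ with $\lim_{t\to\infty}\omega(t)=\infty$ and a function $f$ on $G$ which is both $\omega$-semiconvex and $\omega$-semiconcave but satisfies $f \notin C^{1,\omega}(G)$; this already gives (iii) and (by continuity built into Definition \ref{omse}) the continuity of $f$.

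The only subtlety is that the naive application of Proposition \ref{skoekv}(i) and (iv) would give $UT^1_{2\omega}$-smoothness and $C^{1,2\omega}_*$-smoothness on all lines, i.e.\ with an extra factor $2$, not the sharp $\omega$ claimed in (i) and (ii). To get the sharp versions I would go back one step: the $f$ produced by Theorem \ref{hlav} is actually of the form $f^* = (1/C)f_0$ where $f_0 \in C^1(G)$ arises from Lemma \ref{post} (through the chain Lemma \ref{zaklmn} $\to$ Lemma \ref{ul} $\to$ Lemma \ref{L} $\to$ Proposition \ref{P}), so it is genuinely $C^1$ and, by Lemma \ref{apr}, satisfies the estimate \eqref{ssapr} with modulus $\omega$; by Remark \ref{prf} this is exactly $UT^1_\omega$-smoothness, giving (ii) with no loss. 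For (i), note that \eqref{ssapr} with modulus $\omega$ combined with the computation in the proof of Lemma \ref{post} (see \eqref{hlnap}) shows each $f_{a,v}$ is $C^{1,\omega}_*$-smooth — that is, $f$ is $C^{1,\omega}_*$-smooth on all lines in the sense of Definition \ref{smol}; alternatively one can simply cite Proposition \ref{skoekv}(iv) in the form appropriate to a function already known to satisfy \eqref{ssapr} rather than merely $(2\omega)$-semiconcavity. Either way (i) follows.

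Concretely, the write-up is: fix $\omega$ and $f$ as in Theorem \ref{hlav}; then $f$ is continuous and $f\notin C^{1,\omega}(G)$, so (iii) holds; by Lemma \ref{apr}, $f$ is Fr\'echet differentiable and satisfies \eqref{ssapr}, which by Remark \ref{prf} is precisely the assertion that $f$ is $UT^1_\omega$-smooth, giving (ii); and by the argument establishing \eqref{hlnap} in the proof of Lemma \ref{post} (which uses only \eqref{ssapr}, as noted in Remark \ref{jenapr}), each restriction $f_{a,v}$ is $C^{1,\omega}_*$-smooth, i.e.\ $f$ is $C^{1,\omega}_*$-smooth on all lines, giving (i).

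I do not expect a genuine obstacle here — the theorem is deliberately stated as a packaging of earlier results — but the one point that needs care is \emph{not} to route the argument through Proposition \ref{skoekv}(i)/(iv) as black boxes, since those lose a factor of $2$; instead one must observe that the specific $f$ from Theorem \ref{hlav} satisfies \eqref{ssapr} with the \emph{same} modulus $\omega$ (this is Lemma \ref{apr}), and that both $UT^1_\omega$-smoothness and $C^{1,\omega}_*$-smoothness on all lines are consequences of \eqref{ssapr} alone. The rest is bookkeeping about which earlier statement supplies which clause.
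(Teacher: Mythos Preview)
Your approach is essentially the paper's: Theorem~\ref{hlavp} is explicitly presented there as a reformulation of Theorem~\ref{hlav} via Proposition~\ref{skoekv}, nothing more. Two small corrections are worth noting.

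First, Proposition~\ref{skoekv}(i) already gives $UT^1_\omega$-smoothness with the \emph{same} modulus (it is just Lemma~\ref{apr}), so there is no factor~$2$ to worry about for~(ii); your detour through Lemma~\ref{post} and Remark~\ref{jenapr} for this part is unnecessary.

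Second, for~(i) your proposed sharpening does not actually work: the computation proving~\eqref{hlnap} in Lemma~\ref{post} uses the directional estimate~\eqref{smder}, not~\eqref{ssapr}, and passing from~\eqref{ssapr} to~\eqref{smder} costs exactly the factor~$2$ you are trying to avoid (this is the first step in the proof of Lemma~\ref{aprc}). Remark~\ref{jenapr} concerns Lemma~\ref{le_l}, not Lemma~\ref{post}, so it does not help here. The clean fix is the obvious one you did not mention: take the pair $(\omega_0,f)$ from Theorem~\ref{hlav} and set $\omega:=2\omega_0$. Then~(i) follows from Proposition~\ref{skoekv}(iv), (ii) from Proposition~\ref{skoekv}(i) together with $\omega_0\le\omega$, and~(iii) is preserved because $C^{1,\omega}(G)=C^{1,2\omega_0}(G)=C^{1,\omega_0}(G)$ by Definition~\ref{cjo}; concavity and $\lim_{t\to\infty}\omega(t)=\infty$ are obviously inherited. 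This is presumably what the paper's one-line justification has in mind.
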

In particular, the assumptions on $G$ of   \cite[Corollary 126]{HJ} 
 cannot be relaxed in the case $X= \R^n$.
\medskip

Finally observe that, by Proposition \ref{skoekv}, equivalence \eqref{chardve} can be extended
 as follows.
\begin{proposition}\label{charfor}
Let $X$ and $G$    be as in Theorem \ref{HJ}. Then, for each $\omega \in \M$ and for each function $f$ on $G$, the following conditions are equivalent.
\begin{enumerate}
\item
$f \in C^{1,\omega} (G).$
\item
$f\  \text{is both $(K\omega)$-semiconvex and $(K\omega)$-semiconcave}\ \text{for some}\ K>0.$
\item
 $f$ is $UT^1_{K\omega}$-smooth for some $K>0$.
\item
$f$ is continuous and $C_*^{1,K\omega}$-smooth on all lines for some $K>0$. 
 \end{enumerate}
\end{proposition}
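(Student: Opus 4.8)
The plan is to derive all the equivalences from the chain of implications already assembled in the excerpt, so that essentially no new work is needed beyond bookkeeping. I would prove the cycle (1) $\Rightarrow$ (4) $\Rightarrow$ (2) $\Rightarrow$ (3) $\Rightarrow$ (1), or equivalently close the loop among the four conditions using Proposition \ref{skoekv} and Theorem \ref{HJ}. Concretely: the implication (1) $\Rightarrow$ (4) follows because if $f \in C^{1,K\omega}_*(G)$ for some $K>0$, then restricting to any line $t \mapsto f(a+tv)$ with $\|v\|=1$ and using the chain rule, the derivative $(f_{a,v})'(t) = f'(a+tv)(v)$ inherits the modulus $K\omega$; hence $f$ is continuous and $C^{1,K\omega}_*$-smooth on all lines. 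The implication (4) $\Rightarrow$ (2) is immediate from Proposition \ref{skoekv}(iii) applied to the modulus $K\omega$ (giving that $f$ is both $(K\omega)$-semiconvex and $(K\omega)$-semiconcave, which is (2) with the same $K$).

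Next, (2) $\Rightarrow$ (3): if $f$ is both $(K\omega)$-semiconvex and $(K\omega)$-semiconcave, then Proposition \ref{skoekv}(i) (with modulus $K\omega$) gives that $f$ is $UT^1_{K\omega}$-smooth, which is (3). Finally, (3) $\Rightarrow$ (1) is the heart of the matter and the only place where the hypothesis on $G$ enters: by Remark \ref{prf}, $UT^1_{K\omega}$-smoothness of $f$ is exactly the inequality \eqref{ssapr} with $\omega$ replaced by $K\omega$; then Proposition \ref{skoekv}(ii) (or Lemma \ref{aprc}) yields that $f$ is both $(2K\omega)$-semiconvex and $(2K\omega)$-semiconcave, and Theorem \ref{HJ}, which applies precisely because $X$ and $G$ are as in its statement, gives $f \in C^{1,2K\omega}(G) = C^{1,\omega}(G)$, i.e. (1). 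This closes the cycle.

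I would write the argument as a short sequence of one-line deductions, each citing the relevant item from the excerpt, taking care that the constants $K$ get multiplied by harmless absolute factors ($2$, and the $6e_G$ or $12(1+1/r)$ from Theorem \ref{Q}/Theorem \ref{HJ}) without affecting membership in $C^{1,\omega}(G)$ since that class is closed under replacing $\omega$ by any positive multiple (Definition \ref{cjo}). The only genuine content being invoked is Theorem \ref{HJ}; everything else is a reformulation already packaged in Proposition \ref{skoekv}. Consequently the "main obstacle" is not really an obstacle at all here — it is simply making sure the quantifiers ``for some $K>0$'' are threaded consistently through the cycle, since the intermediate steps change the constant but never its existence. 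No new estimates or constructions are required.
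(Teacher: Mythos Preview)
Your proposal is correct and takes essentially the same approach as the paper, which does not spell out a proof but simply remarks that the equivalence \eqref{chardve} (i.e., (1) $\Leftrightarrow$ (2), already obtained from Theorem \ref{HJ} and \eqref{limpl}) extends to (3) and (4) via Proposition \ref{skoekv}. Your cycle (1) $\Rightarrow$ (4) $\Rightarrow$ (2) $\Rightarrow$ (3) $\Rightarrow$ (1) is a faithful unpacking of exactly that remark, with the constants tracked correctly.
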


The following natural (cf. Remark \ref{oliminf}) question remains open.

\begin{question}\label{jedna}
Let $\omega \in \M$, $n\geq 2$ and let $G \subset \R^n$ be an open convex set
which contains no translation of a convex cone
 with nonempty interior. Does there exist a function $f$ on $G$ which is both $\omega$-semiconvex and $\omega$-semiconcave and does not belong to $C^{1,\omega}(G)$, if
\begin{enumerate}
\item
 $\liminf_{t\to 0+} \frac{\omega(t)}{t} > 0$ \ and \ 
 $\liminf_{t\to \infty} \frac{\omega(t)}{t} = 0$?
\item
$\omega$ is concave,\ $\liminf_{t\to 0+} \frac{\omega(t)}{t} > 0$ \ and \ 
 $\liminf_{t\to \infty} \frac{\omega(t)}{t} = 0$?
\item
 $\omega(t)= t^{\alpha},\ t\geq 0,$ for some  $0< \alpha<1$?
\item
$\omega(t)= \sqrt t,\ t\geq 0$? 
\end{enumerate}
\end{question}

\end{document}